\newtheorem{thm}{Theorem}
\newtheorem{cor}[thm]{Corollary}
\newtheorem{prop}[thm]{Proposition}
\newtheorem{defn}[thm]{Definition}
\newtheorem{rem}[thm]{Remark}
\newcommand{\Wi}{\widetilde}
\def\RR{\mathbb R}
\def\ZZ{\mathbb Z}
\def\F{\cal F}
\def\F{{\mathbf F}}
\def\Int{\mathring}
\def\F{\cal F}
\begin{document}

\title{
 Foliated Milnor conjecture}

\author{Dmitry V. Bolotov}

\address{
 47 Nauky Ave., Kharkiv, 61103, Ukraine 
 \\
\smallskip {\rm E-mail: bolotov@ilt.kharkov.ua}}

\udc{}% Required parameter

\date{Received May 30, 2013} % Date of receipt of the article

%%%%%%%%%%%%%%%%%%%%%%%%
\protect\maketitle \markright{Short title of the article for the headers}% If the full title of the article is %long
%%%%%%%%%%%%%%%%%%%%%%%% Enviroments %%%%%%%%%%%%%%%%%%%%%%%%%%%%%%%%%%%%%%%%%%%%%%%%
\renewenvironment{proof}{\begin{trivlist}\item[]
{\quad\, P r o o f. }}{\hfill\rule{0.5em}{0.5em}\end{trivlist}}% the proof environment
%%%%%%%%%%%%%%%%%%%%%%%%%%%%%%%%%%%%%%%%%%%%%%%%%%%%%  Accepted names of the environments:
\newtheorem{theorem}{\,\quad Theorem}              %%  for theorem
\newtheorem{lemma}{\,\quad Lemma}                  %%  for lemma
\newtheorem{definition}{\,\quad Definition}        %%  for definition
\newtheorem{corollary}{\,\quad Corollary}          %%  for corollary
\newtheorem{proposition}{\,\quad Proposition}      %%  for proposition
\newtheorem{example}{\,\quad\rm E x a m p l e }    %%  for example
\newtheorem{remark}{\,\quad\rm R e m a r k }       %%  for remark
%%%%%%%%%%%%%%%%%%%%% Enumerations %%%%%%%%%%%%%%%%%%
\renewcommand{\thesubsection}{\arabic{subsection}.}
\renewcommand{\theequation}{\arabic{equation}}
\renewcommand{\thetheorem}{\arabic{theorem}}
\renewcommand{\thelemma}{\arabic{lemma}}
\renewcommand{\thedefinition}{\thesubsection\arabic{definition}}
\renewcommand{\thecorollary}{\thesubsection\arabic{corollary}}
\renewcommand{\theproposition}{\thesubsection\arabic{proposition}}
\renewcommand{\theremark}{\thesubsection\arabic{remark}}
\renewcommand{\theexample}{\thesubsection\arabic{example}}
%%%%%%%%%%%%%%%%%%%%%%%%%

\begin{abstract}

We prove that a fundamental group  of codimension one nonnegative Ricci curvature $C^2$-foliation of   a  closed  Riemannian manifold is  finitely generated and almost abelian, i.e. it contains abelian subgroup of finite index.  In particular, we confirm the Milnor conjecture for manifolds   which are leaves of codimension one  nonnegative Ricci curvature foliation of closed manifold.   
% \footnote{If supported by grant.}
\vskip2mm

{\em  Key words}: codimension one foliation,  fundamental group,  Ricci curvature,holonomy \smallskip

{\em Mathematics  Subject  Classification  2010}: 53A05.

\end{abstract}\smallskip

\begin{center}
\subsection{Introduction}
\end{center}
In 1963   Bishop proved  the following theorem  \cite{Bi}.
\newline

{\bf Bishop theorem}.\ A complete nonnegative Ricci curvature manifold has a polynomially volume growth of balls. 
\newline

In 1968 Milnor  showed    that a fundamental group of a complete nonnegative Ricci curvature manifold also has a polynomial growth (in the words metric) and  stated the following conjecture  \cite{Mi}.
\newline

{\bf Milnor conjecture.} \ The fundamental group of any complete Riemannian manifold with nonnegative Ricci curvature is finitely generated.
\newline	
	
 In this paper  we  confirm the Milnor conjecture for leaves of codimension one foliations of nonnegative Ricci curvature and prove the following theorem.
  \newline
  
  {\bf Main theorem.}\
 	Let $L$ be a  leaf of codimension one  nonnegative Ricci curvature foliation $\cal F$ of closed manifold $M$.  Then $\pi_1(L)$ is finitely generated almost abelian group. In particular, it  satisfies the   Milnor's conjecture.	
 \newline
 \begin{rem}\label{rem 1}\rm
 	The Main theorem obviously holds for compact leaves (see theorem \ref{Wi} bellow) and the problem is to show that the information comes from the a structure of nonnegative Ricci curvature foliation is enough to prove the theorem for noncompact leaves. 
 	\end{rem}
\begin{rem}\rm
	   The Main theorem can not be strengthened up to abelian groups. Indeed, consider  standard Reeb  foliation   $\cal F_R=\{L_{\alpha}\}$ on the round sphere $S^3$ (see \cite{T}). It is well known that leaves $L_{\alpha}$ in the induced metric have a nonnegative curvature. The riemannian product  $S^3$ with a  closed nonnegative Ricci manifold $M^n$ having non\-abelian funda\-mental group  give us a codimension one  nonnegative Ricci curvature foliation  ${\cal G=\{L_{\alpha}}\times M^n\}$ on $S^3 \times M^n$ with leaves having nonabelian fundamental group.
	\end{rem}
In the proof we essentially use  properties of almost without holonomy foliations and geometrical properties of complete nonnegative Ricci curvature manifolds which well studied by many famous mathematicians.   So  in 1972 J. Cheeger and D. Gromoll    have generalized  Toponogov's splitting theorem on the case complete nonnegative Ricci curvature Riemannian manifolds and obtained   the following results.
\newline

\begin{thm}[Splitting Theorem  \cite{CG}] \rm 
Let $M$ be a complete Riemannian manifold with Ricci curvature $Ric (M) \geq 0$ has a {\it stright line}  i.e. geodesic line $\gamma$ such that  $d(\gamma(u),\gamma(v)) =|u-v|$ for all $u,v\in \RR$, then $M$ is isometric to  Riemannian product  space $N\times \RR$.  
\end{thm}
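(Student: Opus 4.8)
The plan is to follow the original Busemann-function argument of Cheeger and Gromoll. I would decompose the line into the two rays $\gamma_+(t)=\gamma(t)$ and $\gamma_-(t)=\gamma(-t)$ for $t\ge 0$, and introduce the associated Busemann functions
$$b_\pm(x)=\lim_{t\to\infty}\bigl(t-d(x,\gamma_\pm(t))\bigr).$$
First I would record the elementary properties: the triangle inequality makes $t\mapsto t-d(x,\gamma_\pm(t))$ nondecreasing and bounded above, so the limits exist and each $b_\pm$ is $1$-Lipschitz; and since $d(x,\gamma(t))+d(x,\gamma(-t))\ge d(\gamma(t),\gamma(-t))=2t$, with equality whenever $x$ lies on $\gamma$, one obtains $b_++b_-\le 0$ on $M$ and $b_++b_-\equiv 0$ along $\gamma$.

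The analytic core is to show that $b_+$ is harmonic. By the Laplacian comparison theorem, the hypothesis $\mathrm{Ric}(M)\ge 0$ gives $\Delta\, d(\cdot,p)\le (n-1)/d(\cdot,p)$ in the barrier sense, so $\Delta\bigl(t-d(\cdot,\gamma_\pm(t))\bigr)\ge -(n-1)/d(\cdot,\gamma_\pm(t))$; letting $t\to\infty$ shows that each $b_\pm$ is subharmonic in the support (barrier) sense. Consequently $b_++b_-$ is subharmonic, nonpositive, and attains its maximum value $0$ along $\gamma$. The strong maximum principle for support-subharmonic functions then forces $b_++b_-\equiv 0$, whence $b:=b_+=-b_-$ is simultaneously sub- and superharmonic, i.e. harmonic, and elliptic regularity makes $b$ smooth.

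With $b$ smooth and harmonic I would invoke the Bochner formula
$$\tfrac12\Delta|\nabla b|^2=|\nabla^2 b|^2+\mathrm{Ric}(\nabla b,\nabla b)+\langle\nabla b,\nabla\Delta b\rangle.$$
Since $\Delta b=0$ and $\mathrm{Ric}\ge 0$, the right-hand side is nonnegative, so $|\nabla b|^2$ is subharmonic; it is bounded above by $1$ (because $b$ is $1$-Lipschitz) and equals $1$ along $\gamma$, where $b(\gamma(s))=s$ grows with unit speed. The maximum principle again yields $|\nabla b|^2\equiv 1$, hence $\Delta|\nabla b|^2=0$, and the two nonnegative Bochner terms must vanish separately, giving $\nabla^2 b\equiv 0$. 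Thus $\nabla b$ is a parallel unit vector field; its flow $\Phi_s$ acts by isometries (a parallel field is Killing), its orbits are unit-speed geodesics transverse to the smooth totally geodesic level sets of $b$, and completeness guarantees that $b$ is surjective onto $\RR$. The map $N\times\RR\to M$, $(x,s)\mapsto\Phi_s(x)$ with $N=b^{-1}(0)$, is then the desired isometry onto the Riemannian product. The main obstacle is the middle step: the Busemann functions are a priori only Lipschitz, so both the Laplacian comparison bound and the maximum principle must be handled in the weak barrier/support sense (via Calabi's trick of lower support functions) before harmonicity, and hence smoothness, can legitimately be invoked.
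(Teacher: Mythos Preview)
The paper does not give its own proof of this statement: the Splitting Theorem is quoted as a known result of Cheeger and Gromoll \cite{CG} and used as a black box, so there is nothing in the paper to compare your argument against. Your outline is the standard Cheeger--Gromoll proof (Busemann functions, Laplacian comparison to get barrier-subharmonicity, the strong maximum principle to force $b_+ + b_- \equiv 0$, then Bochner and the maximum principle to get $\nabla^2 b=0$), and it is correct as written; the only caveat you already flag yourself, namely that the subharmonicity and the maximum principle must be run in the support/barrier sense before regularity is available.
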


\begin{thm}[\cite{CG}]\label{CG}\rm
	Let $ M^n $  be a complete manifold of nonnegative Ricci curvature, then: 
	\begin{enumerate}
	\item {$ M^n $ has at most two ends; }
		\item {$ M^n $ is isometric to the riemannian product $ N \times E ^ {k} $ of   the manifold $ N $ and Euclidian  factor $E^k$, where $ N $  does not contain  straight lines.} 

\item{
	If $ M^n $ is closed, then it's universal covering $ \Wi {M^n} $ isometric to the riemannian product $ P \times E ^ {k} $, where $ P $ is a compact and simply connected. Furthermore, the following  extension holds:
	
	\begin{equation}\label{1}\rm
	 1\to E\to \pi_1(M^n)\to \Gamma \to 1,  \end{equation}}
where $E$ is a  finite group and $\Gamma $ is a crystallographic group. 
\end{enumerate}
	\end{thm}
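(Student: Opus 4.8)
The plan is to derive all three assertions from the Splitting Theorem quoted above, following the classical reasoning of Cheeger and Gromoll \cite{CG}. I will freely use that any Riemannian factor, and any Riemannian covering, of a complete manifold with nonnegative Ricci curvature is again complete with nonnegative Ricci curvature. For item 2, I would iterate the Splitting Theorem. If $M^n$ contains no straight line, take $k=0$ and $N=M^n$. Otherwise the Splitting Theorem yields $M^n=N_1\times\RR$ with $N_1$ complete and $\mathrm{Ric}(N_1)\ge0$; if $N_1$ again contains a line, split once more, $N_1=N_2\times\RR$, so that $M^n=N_2\times\RR^2$, and continue. Since $\dim N_j$ drops by one at each step, after $k\le n$ steps one reaches a line-free factor $N$, and the accumulated splitting is the desired isometry $M^n\cong N\times E^k$.

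For item 1, I would first show that more than one end forces a straight line. If $M^n$ has at least two ends, choose a compact set separating two unbounded components, pick $p_i\to\infty$ and $q_i\to\infty$ in distinct components, and let $\sigma_i$ be minimizing geodesics from $p_i$ to $q_i$. Each $\sigma_i$ meets the separating set; recentering so that $\sigma_i(0)$ lies in it and passing to an Arzel\`a--Ascoli limit (using completeness) produces a straight line $\gamma$. The Splitting Theorem then gives $M^n=N\times\RR$. A Riemannian product $N\times\RR$ has exactly two ends if $N$ is compact and exactly one end if $N$ is noncompact, hence at most two in all cases; therefore $M^n$ has at most two ends.

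For item 3, I would apply item 2 to the universal cover: $\Wi{M^n}$ is complete with $\mathrm{Ric}\ge0$, so $\Wi{M^n}\cong P\times E^k$ with $P$ line-free, and $P$ is simply connected because $\Wi{M^n}$ and the factor $E^k$ are. The two crucial inputs are that this splitting is canonical, so that $\mathrm{Isom}(P\times E^k)\cong\mathrm{Isom}(P)\times\mathrm{Isom}(E^k)$, and that $\mathrm{Isom}(P)$ is compact because $P$ is a line-free manifold with $\mathrm{Ric}\ge0$. Granting these, the deck group embeds as a discrete subgroup $\pi_1(M^n)\hookrightarrow\mathrm{Isom}(P)\times\mathrm{Isom}(E^k)$; let $\Gamma$ be its image and $E$ its kernel under the projection onto $\mathrm{Isom}(E^k)=O(k)\ltimes\RR^k$, giving the exact sequence \eqref{1}. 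Since $\mathrm{Isom}(P)$ is compact, this projection is proper on the discrete group, so $\Gamma$ is discrete while $E$, being a discrete subgroup of the compact group $\mathrm{Isom}(P)$, is finite. The projection $P\times E^k\to E^k$ descends to a surjection from the compact manifold $M^n$ onto $E^k/\Gamma$, so $\Gamma$ acts cocompactly and is therefore crystallographic; the fiber $P/E$ of this map is then compact, whence $P$ itself is compact.

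The hard part will be item 3, and precisely the two structural facts about $\mathrm{Isom}(\Wi{M^n})$: that the maximal Euclidean (de Rham) factor $E^k$ is preserved by every isometry, so that the deck transformations genuinely project into $\mathrm{Isom}(E^k)$, and that the isometry group of the line-free factor $P$ is compact --- this last point rests on the fact that a noncompact isometry group would, by recentering minimizing geodesics between distant points of an orbit at a fixed basepoint and taking a limit, generate a straight line in $P$, contradicting line-freeness. Once these are established, identifying $\Gamma$ as crystallographic and deducing finiteness of $E$ and compactness of $P$ are the routine consequences sketched above.
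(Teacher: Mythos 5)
The paper offers no proof of this statement: it is quoted directly from Cheeger--Gromoll \cite{CG}, so there is no internal argument to compare against. Your reconstruction follows the classical route of that reference, and items 1 and 2 are complete and correct: iterating the Splitting Theorem yields the line-free decomposition, and the ``two ends force a line'' recentering argument, combined with counting the ends of a product $N\times\RR$, yields item 1.

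The gap is in item 3, at exactly the point you flag as crucial: the compactness of $\mathrm{Isom}(P)$ for the line-free factor. Your justification --- recenter minimizing geodesics between distant points of an orbit at a fixed basepoint and pass to a limit line --- fails for a general isometric action, because a minimizing geodesic joining two far-apart orbit points need not pass anywhere near the basepoint, and translating it by group elements only brings it back into a fixed compact set when the orbit is cobounded in $P$. Coboundedness is available here, but only because $\pi_1(M^n)$ acts cocompactly on $\Wi{M^n}=P\times E^k$, so that the image of a compact fundamental domain under the projection to $P$ is a compact set whose $\rho(\pi_1)$-translates cover $P$; and once you invoke that, the same recentering applied to minimizing geodesics between \emph{arbitrary} distant points of $P$ shows directly that $P$ is compact, after which compactness of $\mathrm{Isom}(P)$, finiteness of $E$, and discreteness and cocompactness of $\Gamma$ follow as you describe. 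So the deduction should run: cocompactness of the deck action $\Rightarrow$ $P$ compact $\Rightarrow$ $\mathrm{Isom}(P)$ compact; whereas you assume an unproved general fact about line-free manifolds (one that is doubtful in that generality --- note that Theorem \ref{Wi} of Wilking explicitly takes compactness of the isometry group of the non-Euclidean factor as a hypothesis) and only recover compactness of $P$ at the very end. Your other structural input --- that every isometry of $P\times E^k$ preserves the factors because, $P$ being line-free, every line in the product is of the form $\{p\}\times\ell$ --- is correctly identified and is provable by the standard computation of distances in a Riemannian product.
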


Part 3 of the  theorem \ref{CG} was generalized  in 2000 by B. Wilking in
the following theorem. 

\begin{thm} [\cite{Wi}]\label {Wi}\rm
Let  $M^n$ be complete nonnegative Ricci curvature manifold and $q: N\times E^l \to M^n$ be  regular isometric covering, where $N$ has a  compact isometry group (it holds in particular   when $N$ is closed),  then there exists  a  finitely sheeted covering  $N\times T^p\times E^{l-p} \to M^n$.  Moreover, this covering can be made isometric for some deformed Riemanniam metric on $M^n$.   If $M^n$ is closed, then \eqref{1} is equivalent to existence the following extension:
\begin{equation} \label{Wil}
0\to \ZZ^p\to \pi_1(M^n)\to F \to 1,
\end{equation}
where $F$ is finite group.
\end{thm}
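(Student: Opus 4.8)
The plan is to study the deck transformation group of the covering $q$ and to reduce the statement to the structure theory of discrete groups of Euclidean isometries (Bieberbach's theorems), the genuinely new content being the untwisting of the rotational parts by a metric deformation. First I would fix notation. Let $\Gamma$ be the deck group of the regular covering $q\colon N\times E^l\to M^n$, so $\Gamma$ acts freely, properly discontinuously and isometrically on $N\times E^l$ with $M^n=(N\times E^l)/\Gamma$. Since $\mathrm{Isom}(N)$ is compact, $N$ has no flat de Rham factor (a Euclidean factor would contribute translations, hence noncompactness, to $\mathrm{Isom}(N)$); thus $E^l$ is the full flat de Rham factor of $N\times E^l$, and the de Rham decomposition theorem gives $\mathrm{Isom}(N\times E^l)=K\times\bigl(O(l)\ltimes\RR^l\bigr)$ with $K:=\mathrm{Isom}(N)$ compact. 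Writing $\gamma=(\sigma(\gamma),\rho(\gamma),\tau(\gamma))$ with $\sigma(\gamma)\in K$, $\rho(\gamma)\in O(l)$, $\tau(\gamma)\in\RR^l$, I get homomorphisms $\sigma\colon\Gamma\to K$ and $\Phi=(\rho,\tau)\colon\Gamma\to\mathrm{Isom}(E^l)$.

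Next I would reduce to a purely Euclidean problem. The kernel of $\Phi$ acts trivially on $E^l$, hence is discrete in the compact group $K$ and therefore finite. Using that $K$-orbits are compact together with proper discontinuity of $\Gamma$ on $N\times E^l$ (apply it to sets of the form $(K\text{-orbit})\times\bar B$), one shows that $\Delta:=\Phi(\Gamma)$ is a \emph{discrete} subgroup of $\mathrm{Isom}(E^l)$; thus $\Gamma$ is a finite extension of the discrete Euclidean group $\Delta$. By the structure theory of discrete subgroups of $\mathrm{Isom}(\RR^l)$, $\Delta$ contains a finite index free abelian subgroup generated by commuting screw motions whose translation parts span a subspace $V\cong\RR^p$. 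Pulling this back to $\Gamma$ and intersecting with the preimage of the identity component of $\overline{(\sigma,\rho)(\Gamma)}\subseteq K\times O(l)$, I obtain a finite index normal subgroup $\Gamma'\cong\ZZ^p\trianglelefteq\Gamma$ whose generators $(\kappa_j,R_j,v_j)$ have linearly independent translation parts $v_j$ spanning $V$ and whose compact parts $(\kappa_j,R_j)$ lie in a single torus $T\subseteq K\times O(l)$ (freeness forces any nontrivial element of $\Gamma'$ to have nonzero translation part, so the $v_j$ are independent). The covering $(N\times E^l)/\Gamma'\to M^n$ is then finitely sheeted and diffeomorphic to $N\times T^p\times E^{l-p}$.

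The ``Moreover'' part is where the deformation enters. Lifting $T$ to its Lie algebra and choosing $\xi_j$ with $\exp\xi_j=(\kappa_j,R_j)$, I would deform the generators to $(\exp((1-s)\xi_j),v_j)$ for $s\in[0,1]$. Since the $\xi_j$ lie in the abelian Lie algebra of $T$ and the $v_j$ commute, this defines for each $s$ a homomorphism of $\ZZ^p$ into the isometry group, and the resulting actions remain free and properly discontinuous because these properties are governed by the \emph{unchanged} translation lattice $\sum\ZZ v_j$. At $s=1$ the compact parts vanish, so $\Gamma'$ acts by pure translations of $V$ and the quotient is the Riemannian product $N\times T^p\times E^{l-p}$. \textbf{The hard part} is to perform this untwisting $F$-equivariantly, where $F=\Gamma/\Gamma'$ is finite, so that the deformed product metric descends to a metric on $M^n$ for which $q'$ is isometric: the compact holonomy parts of the full group $\Gamma$ need not commute globally, and one must exploit that they lie in \emph{connected} compact groups in order to cancel them by a deformation that survives passage to the quotient $M^n=\bigl(N\times T^p\times E^{l-p}\bigr)/F$. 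This equivariant descent is the crux of the theorem.

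Finally I would treat the closed case. If $M^n$ is closed, the $\Gamma$-action on $N\times E^l$ is cocompact, and since $N$ is compact this forces $\Delta$ to act cocompactly on $E^l$; thus $\Delta$ is crystallographic and $p=l$, so $\Gamma'\cong\ZZ^l\trianglelefteq\pi_1(M^n)$ yields exactly the extension \eqref{Wil}. The equivalence with \eqref{1} is then pure group theory: by Bieberbach's theorems a group is virtually crystallographic precisely when it contains a finite index normal free abelian subgroup, so a finite extension of a crystallographic group as in \eqref{1} is the same thing as a finite extension of some $\ZZ^p$ as in \eqref{Wil}. Conversely, starting from \eqref{Wil} one recovers the crystallographic quotient of \eqref{1} by dividing $\pi_1(M^n)$ by its maximal finite normal subgroup, which plays the role of the finite group $E$.
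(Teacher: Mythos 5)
First, a caveat: the paper does not prove this statement at all --- Theorem \ref{Wi} is quoted from Wilking's article \cite{Wi} and used as a black box --- so there is no internal proof to compare yours against, and what follows is an assessment of your sketch on its own terms. Your overall strategy (split $\mathrm{Isom}(N\times E^l)$ as $K\times\mathrm{Isom}(E^l)$, project the deck group to a discrete group of Euclidean motions, extract a finite-index $\ZZ^p$ whose holonomy lies in a torus by Bieberbach-type structure theory, and untwist) is the standard route and is consistent with the Cheeger--Gromoll/Wilking framework. One step needs repair: the splitting of the isometry group does not follow merely from compactness of $\mathrm{Isom}(N)$ as you assert, since $N$ need not be simply connected and the de Rham uniqueness theorem is not directly available. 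The correct argument uses nonnegative Ricci curvature: if $N$ contained a line it would split off a Euclidean factor by Cheeger--Gromoll, contradicting compactness of $\mathrm{Isom}(N)$; hence every line of $N\times E^l$ lies in a fiber $\{\mathrm{pt}\}\times E^l$, every isometry preserves the resulting foliation and its orthogonal complement, and the product decomposition of the isometry group follows. A smaller slip: ``freeness forces nonzero translation part'' is not quite right --- an element $(\kappa,R,0)$ with $\kappa$ acting freely on $N$ is a free isometry; what you actually need is that such an element generates a discrete subgroup of the compact group $K\times O(l)$, hence has finite order, hence is trivial in the torsion-free group $\Gamma'$.

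The genuine gap is the one you flag yourself: the ``Moreover'' clause. You construct the one-parameter untwisting of the $\Gamma'\cong\ZZ^p$-action but explicitly leave open whether it can be performed $F$-equivariantly so that the deformed product metric descends to $M^n$ and makes the finite covering isometric. That is not a technical footnote to be deferred --- it is precisely the content that distinguishes Wilking's statement from the older Cheeger--Gromoll picture, and a proof that stops at ``this equivariant descent is the crux'' has not proved the theorem as stated. By contrast, your treatment of the closed case and of the equivalence between \eqref{1} and \eqref{Wil} (crystallographic groups are virtually $\ZZ^p$ with normal such subgroups obtained by intersecting conjugates; conversely quotient by the maximal finite normal subgroup) is correct and complete at the level of detail expected here.
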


\begin{center}
\subsection{Foliations}
\end{center}

Let us recall the notion of  foliation defined on $n$-dimensional manifold $M$. 
We say that a family ${\cal F}=\{F_{\alpha}\}$ of path-wises connected subsets (leaves) of $M$ defines a  foliation of dimension  $p$ (or codimension $q$, where $p+q=n$)   on $M$  if 
\begin{itemize}
 \item $\cal F$  is a partition of $M$, i.e.  $M = \underset{\alpha}\coprod F_{\alpha}$ 
 
 \item There is  a {\it foliated atlas}  ${\cal U} =\{ (U_\lambda,
\varphi_{\lambda})\}_{\lambda\in{ \Lambda}}$ on $M$. This means that  each connective component of the leaf in  the  foliated chart  with the coordinates $(x_1,\dots, x_p,y_1,\dots,y_q)$ looks like a plane
$$ y_{1}=const,\dots,y_{q}=const.$$
\end{itemize}
and
the transition maps 
$$g_{ij}=\varphi_i\circ \varphi_j^{-1}:\varphi_j(U_i\cap U_j) \to \varphi_i(U_i\cap U_j)$$
have the form

\begin{equation}\label{eq1}
	g_{ij}(x,y) = (\hat g_{ij}(x,y), \bar g_{ij}(y)),
\end{equation}
where  $x\in \RR^p, y \in \RR^q$.

The atlas ${\cal U} = \{ (U_\lambda,
\varphi_{\lambda})\}_{\lambda \in \Lambda}$ supposed at least $C^2$-smooth and good. The latter means the following:

\begin{enumerate}
	\item [1)]$\cal U$ - locally finite;
	\item [2)] $U_{\lambda}$ -  relatively compact in $M$ and $\varphi_\lambda(U_\lambda)= (-1,1)^n\subset \RR^n$;
	\item [3)] $\overline {U_i\cup U_j}\subset W_{ij}$,
	where $(W_{ij},\psi_{ij})$ - foliated chart which not necessary belonging to ${\cal U}$.
\end{enumerate}

Let $ \pi: (-1,1)^n\to (-1,1)^q$ be a natural projection to the
last $q$ coordinates.

The preimage  $P_{\lambda}:=\varphi_{\lambda}^{-1}(\pi^{-1}(x))$
is called  a local leaf. Denote the space of local leaves  by $Q_{\lambda}$. Clearly, $Q_{\lambda}\simeq (-1,1)^q$ and

$$U_\lambda=\bigcup_{x\in (-1,1)^q} \varphi_{\lambda}^{-1}(\pi^{-1}(x)).$$

A foliation  $\cal F$ is called oriented if such is the tangent to $\cal F$ $p$-dimensional  distribution $T^{\cal F}M\subset TM$, and $\cal F$ is called transversely oriented if such is some transversal  to $\cal F$ distribution of dimension $q=n-p$. If the manifold $M$ is supposed  riemannian, then transversely  orientability of $\cal F$ is equivalent to transversely  orientability of orthogonal    distribution   $T^{\cal F^\perp}M$.

\begin{center}
\subsection{Holonomy}
\end{center}

We recall the notion of {\it holonomy.}

Let  $l:[0,1]\to L$   be a closed path in the leaf  $L\in \cal
F$. It  was shown in \cite {T} that there exists a chain of the foliated charts ${\cal C}=\{U_0, \dots,
U_{n-1}, U_n=U_0 \}$,  which  cover of $l([0,1])$ such that:

\begin{enumerate}
	\item [a)] There exists a devision of the segment
	$[0,1]:0=t_0<t_1<\dots<t_{n}=1$, such that $l([t_{i},t_{i+1}])\subset
	U_i,\ i=0,\dots,n-1$;
	\item [b)] If the intersection $P_i\cap U_{i+1}\not = \emptyset$, then it is connective. It means that the local leaf $P_{i+1}$ is correctly defined.
\end{enumerate}

It can be shown that  a set of points $z\in U_0$ which are correctly define the chain $\cal C$  from the initial condition $z\in P_0(z)$ is an open in $U_0$. Thus there exists some neighborhood $O$ of the $P_0$ consisting of the local leaves for which the following  local diffeomorphism $\Gamma_l :V_0 \to (-1,1)^q$ of some neighborhood  of zero $V_0\subset
(-1,1)^q$ to $(-1,1)^q$ is well defined as follows:
$$ \Gamma_l(\pi\circ\varphi_0(P_0(z))=\pi\circ\varphi_0(P_{n}(z)).$$

In \cite{T} was shown that the local diffeomorphisms  $\{\Gamma_l :V_0 \to (-1,1)^q\}$    define the homomorphism $$ \Psi:\pi_1(L)\to G_q$$
of the fundamental group $\pi_1(L)$ to the group of diffeomorphism germs $G_q$ in the origin  $0\in \RR^q$. This homomorphism is defined up to inner automorphisms and it is called  a
{\it holonomy} of  $L$. It's image is called a {\it group of holonomy} of $L$ and denoted by $H(L)$.

Note that if the codimension one foliation  is transversely oriented then  one-sided holonomy $$\Psi^+:\pi_1(L)\to G^{+}_1$$ of the leaf  $L$  is well defined by analogous, where $G^{+}_1$ denotes the group of germs  of one-sided  diffeomorphisms  at $0$   defined on half-intervals $[0,\varepsilon)$.

Recall the following  important results  about the holonomy of leaves.

\begin{thm}[\cite{PTh}]\label{PTh}\rm
	Let $L$ be a leaf of codimension one $C^2$-foliation. If $H(L)$ has a polynomial growth then $H(L)$ is a torsion free abelian group.
	\end{thm}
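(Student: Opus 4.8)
The plan is to exploit the special feature of codimension one: since here $q=1$, the holonomy image $H(L)$ is a subgroup of the group $G_1$ of germs at $0\in\RR$ of $C^2$-diffeomorphisms. Passing to the subgroup $H^+(L)$ of orientation-preserving germs (of index at most two), the first step is to introduce the \emph{derivative homomorphism}
\begin{equation*}
D\colon H^+(L)\to (\RR_{>0},\,\cdot\,),\qquad D(f)=f'(0),
\end{equation*}
whose target is abelian. Consequently every commutator $[f,g]$ lands in the normal subgroup $K=\ker D$ of germs tangent to the identity, i.e.\ germs of the form $f(x)=x+o(x)$. The problem then splits into (i) understanding the image $D(H^+(L))$, which (being a quotient of a finitely generated group) is a finitely generated torsion-free subgroup of $\RR_{>0}$, hence free abelian; and (ii) showing that the tangent-to-identity part, and in particular all commutators, is forced to be trivial.

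Second, I would dispose of torsion. An orientation-preserving $C^2$-germ $f$ fixing $0$ with $f'(0)=\lambda\neq 1$ has infinite order, since $D(f^n)=\lambda^n\neq 1$ for all $n\neq0$. If $\lambda=1$ and $f\neq\mathrm{id}$, writing the first nonvanishing jet as $f(x)=x+ax^k+o(x^k)$ with $a\neq0$, one computes $f^n(x)=x+n\,a\,x^k+o(x^k)$, so $f^n$ is never the identity germ. Hence $H^+(L)$ is torsion free; after the reduction to the orientation-preserving case this yields the ``torsion free'' assertion, the orientation-reversing part contributing at most an element of order two which is excluded in the transversely oriented setting where one works with the one-sided holonomy $\Psi^+$.

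Third --- and this is the heart and the main obstacle --- I would prove that $H^+(L)$ is abelian by combining the polynomial growth hypothesis with the $C^2$-rigidity of one-dimensional dynamics. The two essential tools are \emph{Kopell's Lemma} (two commuting $C^2$-diffeomorphisms of $[0,\varepsilon)$ fixing $0$, one fixed-point free on $(0,\varepsilon)$ and the other possessing an interior fixed point, must force the latter to be the identity) and \emph{Plante's theorem} that a pseudogroup of subexponential growth carries a holonomy-invariant transverse measure. Polynomial growth makes $H^+(L)$ subexponential, hence amenable, producing such an invariant measure; integrating displacement against it yields a translation-number homomorphism $\tau\colon K\to(\RR,+)$ controlling the asymptotic behaviour of the tangent-to-identity germs. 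I would then argue that a nontrivial commutator $c=[f,g]\in K$ is incompatible with this structure: the invariant measure forbids the resilient (ping-pong) configurations that a non-abelian pair of $C^2$-germs would generate, while the Szekeres--Kopell theory forces germs sharing a common fixed-point structure either to commute or to be trivial. Together these show $K$ embeds into the time-line of a single flow and is abelian, and one further application of Kopell's Lemma rules out a non-abelian extension of the free abelian quotient $D(H^+(L))$ (which would regenerate exponential growth), so that $H^+(L)$, and hence $H(L)$, is abelian.

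The step I expect to be genuinely delicate is this last one: converting ``polynomial growth'' into the vanishing of commutators. The $C^2$ hypothesis is indispensable (the statement fails in class $C^1$), and the argument must intertwine the measure-theoretic amenability input with the pointwise rigidity of Kopell--Szekeres rather than using either in isolation; the most care is required in tracking the common fixed-point structure of the germs so that Kopell's Lemma is actually applicable.
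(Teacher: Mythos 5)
The paper does not actually prove this statement: Theorem \ref{PTh} is quoted from Plante--Thurston \cite{PTh} and used as a black box, so there is no internal proof to compare yours against. Measured against the original Plante--Thurston argument, your outline assembles the correct ingredients --- the derivative homomorphism $D(f)=f'(0)$, Kopell's Lemma, and the holonomy-invariant transverse measure that Plante extracts from subexponential growth, together with the translation-number homomorphism it induces --- and you correctly locate the difficulty in converting polynomial growth into the vanishing of commutators. That is indeed how the original proof is organized.

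Two points, however, keep this from being a proof. First, the central step is announced rather than executed: the assertions that ``the invariant measure forbids the resilient configurations'' and that ``Szekeres--Kopell theory forces germs sharing a common fixed-point structure either to commute or to be trivial'' are essentially restatements of the theorem, and you give no argument for them. In particular you never say how the kernel of the translation-number homomorphism (the elements fixing the support of the invariant measure pointwise) is analyzed on the complementary gaps, which is exactly where Kopell's Lemma and the $C^2$ hypothesis do their work; without that, nothing yet excludes a nonabelian group all of whose elements are tangent to the identity, and nothing yet shows the quotient and the kernel assemble into an abelian group rather than a nonabelian extension. Second, your torsion-freeness argument via a first nonvanishing jet $f(x)=x+ax^k+o(x^k)$ is not available for a germ that is merely $C^2$: such a germ can differ from the identity while being tangent to the identity to order $2$, and no higher jets exist. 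The conclusion is nevertheless elementary and needs no jets: if $f$ is an orientation-preserving germ with $f(x)>x$ for some $x>0$ arbitrarily close to $0$, then monotonicity gives $f^n(x)>x$ for all $n\geq 1$, so $f$ has infinite order. In short, the skeleton matches the known proof, but the load-bearing step is missing.
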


Nishimori proved the next theorem which describes the behavior of  codimension one foliation  in the neighborhood of a compact leaf with abelian holonomy.

\begin{thm} [\cite{N}] \label{N} \rm Let ${\cal F}$ be a transversely oriented $C^{r} $ - foliation of codimension one of oriented $n$ - dimensional manifold $M$ and $F_{0} $ is a compact leaf  of ${\cal F}$. Suppose that $2\le r\le \infty $. Let  $T$ be a tubular neighborhood of  $F_{0} $ and  $U_{+} $ is a union of $F_{0} $ and a connective component $T\backslash F_{0} $.  Let us suppose that $H(F_{0}) $ is abelian.  Then one of the following   holds.
	\begin{enumerate}
		
		\item [1)] For any neighborhood $V$ of $F_{0} $ the restriction of the foliation  ${\cal F}|_{V\bigcap U_{+}} $ has a compact leaf different from $F_{0} $.
		
		\item [2)] There exists a neighborhood $V$ of  $F_{0} $ such that all leaves ${\cal F}|_{V\bigcap U_{+}} $ excepted $F_{0} $ are dense into $V\bigcap U_{+} $. In this case  $H(F_{0}) $ is free abelian of rank $\ge 2$.
		
		\item [3)] There is a neighborhood $V$ of  $F_{0} $ and connective closed codimension one oriented submanifold $N$ in $F_{0} $ with the following properties.  Denote by $F_{*} $ a compact manifold with boundary, obtained by gluing two copies  $N_{1} $ and $N_{2} $ of $N$ into $F_{0} \backslash N$ such that $\partial F_{*} =N_{1} \bigcup N_{2} $.  Let $f:[0,\varepsilon )\to [0,\delta )$ contracting $C^{r}- $ diffeomorphism such that  $f(0)=0$. Denote by  $X_{f} $ factor-manifold obtained from $F_{*} \times [0,\varepsilon )$ by identifying  $(x,t)\in N_{1} \times [0,\varepsilon )$ and $(x,f(t))\in N_{2} \times [0,\delta )$. After factorization we obtain  factor-foliation $F_{f} $ on $X_{f} $. It is claimed that for some $f$, described above, there exists $C^{r} $ - diffeomorphism $h:V\bigcap U_{+} \to X_{f} $, which maps each leaf of  ${\cal F}|_{V\bigcap U_{+}} $ on some leaf of  $F_{f} $. The foliation ${\cal F}|_{V\bigcap U_{+}} $ uniquely defines the class $[N]\in H_{n-2} (F_{0} ,\ZZ)$, and the germ at zero of the map $f$ is unique up to conjugate. In this case  $H (F_{0} )$ is infinite cyclic group.
	\end{enumerate}
\end{thm}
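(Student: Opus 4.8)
The plan is to extract the entire local picture from the one-sided holonomy. Since ${\cal F}$ is transversely oriented and $F_0$ is compact, a tubular neighbourhood splits topologically as $F_0\times(-1,1)$, and the component $U_+$ as $F_0\times[0,\varepsilon)$; the standard local model near a compact leaf says that the germ of ${\cal F}$ along $F_0$ on the $U_+$ side is $C^r$-conjugate to the suspension of the one-sided holonomy $\Psi^+\colon\pi_1(F_0)\to G^+_1$. Writing $\Wi{F_0}$ for the universal cover and realising the germs $\rho:=\Psi^+$ as honest orientation-preserving $C^r$-diffeomorphisms of a fixed half-interval $[0,\varepsilon)$ (compactness of $F_0$ makes $\varepsilon$ uniform), a neighbourhood of $F_0$ in $U_+$ is
\[ \bigl(\Wi{F_0}\times[0,\varepsilon)\bigr)/\pi_1(F_0), \]
with $\pi_1(F_0)$ acting by deck transformations on the first factor and through $\rho$ on the second, the leaves being the projections of the slices $\Wi{F_0}\times\{t\}$. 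Put $G:=H(F_0)=\rho(\pi_1(F_0))$, an abelian subgroup of $\mathrm{Diff}^r_+([0,\varepsilon),0)$, and set $\mathrm{Fix}(G)=\bigcap_{g\in G}\mathrm{Fix}(g)$. The central observation, using that orientation-preserving maps of an interval have no nontrivial periodic points, is that the leaf $L_t$ through $t\in(0,\varepsilon)$ is a covering of $F_0$ with deck group $\{\gamma:\rho(\gamma)t=t\}$, and $L_t$ is \emph{compact} precisely when the $G$-orbit of $t$ is finite, i.e.\ exactly when $t\in\mathrm{Fix}(G)$; in that case $L_t\cong F_0$.

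The argument then splits on the behaviour of $\mathrm{Fix}(G)$ at $0$. If $\mathrm{Fix}(G)$ accumulates at $0$, the observation produces compact leaves $\cong F_0$ distinct from $F_0$ in every neighbourhood, which is alternative 1). Otherwise I shrink $\varepsilon$ so that $G$ has no common fixed point in $(0,\varepsilon)$. This is where $r\ge 2$ is essential: by Kopell's lemma the centraliser in $\mathrm{Diff}^2$ of a fixed-point-free contraction can have no interior fixed points, and since $G$ is abelian this upgrades ``no common fixed point'' to ``$G$ acts freely on the open interval $(0,\varepsilon)\cong\RR$''. By Hölder's theorem a free orientation-preserving action on $\RR$ is topologically conjugate to an action by translations, so $G$ is isomorphic to a (torsion-free) subgroup of $(\RR,+)$; being a quotient of the finitely generated group $\pi_1(F_0)$, it is a finitely generated subgroup of $\RR$.

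Every such subgroup is either infinite cyclic or dense, and this dichotomy yields alternatives 3) and 2). If $G$ is dense, its rank is at least $2$, the translation orbits are dense, hence every leaf other than $F_0$ is dense in the neighbourhood — this is alternative 2), with $H(F_0)$ free abelian of rank $\ge2$. If instead $G\cong\ZZ$, generated on the $U_+$ side by a contraction $f$ whose only fixed point is $0$, then $\rho$ factors as $f^{\alpha(\cdot)}$ for a primitive epimorphism $\alpha\colon\pi_1(F_0)\to\ZZ$, and $\ker\rho=\ker\alpha$ so each $L_t$ is the noncompact infinite cyclic cover of $F_0$. Realising $\alpha\in H^1(F_0,\ZZ)\cong[F_0,S^1]$ by a smooth map $F_0\to S^1$ and taking the preimage of a regular value gives a closed oriented codimension-one submanifold $N\subset F_0$ with $[N]\in H_{n-2}(F_0,\ZZ)$ Poincaré dual to $\alpha$; cutting along $N$ produces $F_*$ with $\partial F_*=N_1\sqcup N_2$, and the suspension above is exactly the model $X_f$ obtained by gluing $F_*\times[0,\varepsilon)$ through $f$. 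Since $\alpha$, hence $[N]$, is determined by $\ker\rho$, and $f$ is the holonomy generator read on a transversal, both are unique up to the indicated equivalences, giving alternative 3).

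The step I expect to be most delicate is the passage from ``no common fixed point'' to a genuinely free action in the second paragraph: individual holonomy germs may a priori carry interior fixed points, and excluding this for the whole abelian group is precisely where $C^2$-regularity (Kopell's lemma) is indispensable — in lower regularity the trichotomy can fail. The remaining work, namely producing the explicit $C^r$-diffeomorphism $h\colon V\cap U_+\to X_f$ carrying leaves to leaves in alternative 3) and checking the asserted uniqueness, is routine once the suspension description and the Hölder conjugacy to translations are in hand.
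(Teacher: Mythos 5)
First, a point of comparison: the paper gives no proof of this statement at all --- it is Nishimori's theorem, imported verbatim with the citation \cite{N} --- so there is no internal argument to measure yours against; I can only judge the sketch against what such a proof must contain. Your architecture (pass to the suspension of the one-sided holonomy, then run a fixed-point trichotomy for the abelian group $G=H(F_0)$ via Kopell and H\"older) is the right shape, but the two steps carrying all the weight are genuinely incomplete. (a) The passage from ``$\mathrm{Fix}(G)$ does not accumulate at $0$'' to ``$G$ acts freely near $0$'' is not a consequence of Kopell's lemma as you invoke it: Kopell requires a single element of $G$ that is \emph{already} fixed-point-free on $(0,\delta)$, and ``no common fixed point'' does not produce one. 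A priori every generator could have fixed points accumulating at $0$ with empty common intersection. Excluding this (in fact folding it back into alternative 1) needs an extra argument --- e.g.\ apply Kopell on each complementary gap $(a,b)$ of $\mathrm{Fix}(g_1)$, using that the other generators permute these gaps, to conclude that a generator with a fixed point inside a gap is the identity on its closure, so that gap endpoints become common fixed points. This is exactly the non-routine part of Nishimori's analysis, and your sketch skips it. (b) H\"older's theorem yields only an injective translation-number homomorphism $G\to(\RR,+)$ and a \emph{semi}-conjugacy to translations; topological conjugacy, hence density of every orbit --- which is what alternative 2 asserts about the leaves --- can fail for free actions with wandering intervals. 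To get minimality you must use $C^2$ a second time, via the Szekeres vector field of a fixed-point-free element (whose flow contains the $C^1$-centralizer, hence all of $G$) or a Denjoy--Sacksteder argument; H\"older alone does not deliver alternative 2.

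There are also smaller but real gaps. Identifying an actual neighbourhood $V\cap U_+$ (not merely the germ of the foliation at $F_0$) with the suspension of the holonomy requires realizing a finitely generated group of \emph{germs} as an honest action on a uniform half-interval; this uses finite presentability of $\pi_1(F_0)$ and some care, and it cannot be waved through since alternative 3 demands an explicit $C^r$-diffeomorphism $h$ onto the model $X_f$. Likewise, the preimage of a regular value of a map to $S^1$ dual to the primitive class $\alpha$ need not be connected, whereas the statement requires $N$ connected. None of these objections show the approach is wrong --- it is the standard and correct route --- but items (a) and (b) are the mathematical substance of the theorem, so what you have is a sound plan rather than a proof.
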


 	A foliation is called  {\it a foliation  without holonomy} if the holonomy of each leaf is trivial, and	one is called  a {\it foliation almost without holonomy} if the holonomy of noncompact leaves is trivial.
 	For example,  Reeb foliation $\cal F_{R}$ is almost without holonomy foliation on $S^3$ since all leaves of $\cal F_{R}$ except of a single compact leaf homeomorphic to  torus are homeomorphic to $\RR^2$ and have a trivial fundamental group.

Let us denote by  {\it block}    a compact saturated  subset $B$ of codimension one foliated $n$-dimensional manifold which is  $n$-dimensional submanifold with a  boundary. Recall that {\it a saturated set} of the foliation $\cal F$ of a manifold $M$ is called a subset of $M$ which is a union of leaves of $\cal  F$.  Clearly that $\partial B$ is a finite union of compact leaves.

 The following theorem is a reformulation of the deep results of Novikov \cite{No},  that relate to  foliations without holonomy and Imanishi \cite{Im},  that relate to    foliations almost without holonomy.
 
\begin{thm}\label{Im} \rm
	Let $L$ be a  noncompact leaf of a  codimension one almost without holonomy   foliation $\F$ of a closed n-dimensional manifold $M$. Then   one of the following   holds:
	\begin{itemize}
		\item [a)]  $\cal F$ is a foliation without holonomy. All leaves  are diffeomorphic to the typical leaf $L$ and    dense in $M$. We have the following group extension:
		\begin{equation}\label{3}
		1\to \pi_1(L)\to \pi_1(M) \to \ZZ^k\to 0,
		\end{equation}
		 where $k>0$ and $k=1$ iff  the foliation $\cal F$ is a locally trivial fibration over the circle.
		 
		 The universal covering $\Wi{M}$    has the form:
		$$\Wi{M} \cong \Wi L \times \RR.$$
		\item [b)] $L$ belongs to some  block $B$ such that all the leaves in the interior $\Int  B$  are diffeomorphic to the typical leaf $L$ and     all are or  dense in $\Int B$ ($B$ is called  a dense block in this case) or proper in $\Int  B$ ($B$ is called a proper block in this case). 
		We have  the following group extension:
		\begin{equation}\label{3}
		1\to \pi_1(L)\to \pi_1(B) \to \ZZ^k\to 0,
		\end{equation}
		 where $k>0$ and $k=1$ iff $B$ is proper and the foliation in $\Int  B$ is a locally trivial fibration over the circle. 
		 
		 The universal covering of $\Int  B$   has the form:
		\begin{equation}\label{4}
		\Wi{\Int B} \cong \Wi L \times \RR.
		\end{equation}
	\end{itemize}
\end{thm}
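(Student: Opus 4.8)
The plan is to reduce everything to the structure theory of codimension one foliations \emph{without} holonomy and then to read off both the group extensions and the product splittings from a single transverse \emph{period homomorphism}. First I would examine the union $\mathcal{K}$ of all compact leaves. Each compact leaf is a closed saturated set, and the hypothesis that $\cal F$ is almost without holonomy forces every noncompact leaf to have trivial holonomy; a properness argument shows that $\mathcal{K}$ is a closed saturated subset of $M$. If $\mathcal{K}=\emptyset$, then every leaf is noncompact and holonomy-free, so $\cal F$ is a foliation without holonomy and we are heading for alternative~a). If $\mathcal{K}\neq\emptyset$, the connected components of the metric completion of $M\setminus\mathcal{K}$ are the blocks $B$ of alternative~b): each is a compact $n$-manifold whose boundary is a union of compact leaves and whose interior $\Int B$ carries a foliation all of whose leaves are noncompact, hence holonomy-free. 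In both situations the problem has been localized to a region foliated without holonomy.

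The core step is the analysis of such a holonomy-free region, and here I would invoke the deep results of Novikov and Imanishi rather than reprove them: on a complete transversal the holonomy pseudogroup of a codimension one $C^2$ foliation without holonomy is, after a change of transverse coordinate, generated by germs of \emph{translations}. Vanishing of holonomy is exactly what excludes nontrivial contracting or expanding fixed-point germs, and a centralizer/commuting argument then conjugates the whole pseudogroup into the translation group of $\RR$. Evaluating this transverse translation structure along loops produces a period homomorphism $h\colon\pi_1(M)\to\RR$ (respectively $h\colon\pi_1(B)\to\RR$ in the block case), which records how far the transversal is dragged as one travels around the manifold.

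From $h$ the remaining assertions follow essentially formally. Since $M$ (respectively $B$) is compact, $\pi_1$ is finitely generated, so the image $\Gamma=\im h$ is a finitely generated subgroup of $\RR$ and is therefore free abelian, $\Gamma\cong\ZZ^k$; one has $k>0$ because $h$ trivial would force closed transversals and compact leaves, contradicting that the typical leaf $L$ is noncompact. Lifting the transverse translation structure to the universal cover gives a submersion onto $\RR$ whose fibers are the lifts of the leaves, and the absence of holonomy makes it a trivial fibration, which yields $\Wi M\cong\Wi L\times\RR$ in case a) and $\Wi{\Int B}\cong\Wi L\times\RR$ (equation~\eqref{4}) in case b). The homotopy sequence of this fibration shows that $\pi_1(L)\hookrightarrow\pi_1(M)$ is injective with quotient $\Gamma\cong\ZZ^k$, i.e. the asserted extension; in particular all leaves are diffeomorphic to $L$, and they are dense in $M$ (respectively the dense versus proper block dichotomy records whether $\Gamma$ has rank $\ge 2$ or the transverse structure is a genuine fibration).

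Finally, the characterization of the case $k=1$ comes from Tischler's argument: when $\Gamma\cong\ZZ$ the period class is, up to scaling, integral, so the closed form defining the foliation can be taken with integral periods and the foliation is a locally trivial fibration over the circle; conversely such a fibration has $\Gamma\cong\ZZ$. In the block case the same reasoning applied to $\Int B$ gives $k=1$ exactly for a proper block fibering over $S^1$. The main obstacle, which I would cite rather than establish, is precisely the two structural inputs being reformulated here: that the compact leaves cut $M$ into honest compact blocks with boundary, and that a holonomy-free codimension one $C^2$ pseudogroup is conjugate to a group of translations. These are the contributions of Novikov and Imanishi, and once they are granted the bookkeeping above produces both alternatives of the theorem.
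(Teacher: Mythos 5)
Your proposal is correct and takes essentially the same route as the paper: the paper offers no proof of this theorem, presenting it explicitly as a reformulation of the results of Novikov \cite{No} and Imanishi \cite{Im}, and your argument delegates exactly the same two structural inputs (the decomposition along the closed union of compact leaves into compact blocks, and the conjugation of the holonomy-free transverse pseudogroup to translations) to those sources. The period homomorphism, the resulting extension by $\ZZ^k$, the trivialization $\Wi M\cong\Wi L\times\RR$ over the universal cover, and the Tischler-type characterization of $k=1$ are the standard bookkeeping by which the cited results yield the stated form.
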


	Let us call the   blocks from  $b)$   {\it an  elementary blocks}.

\begin{center}
\subsection{Growth of  leaves}
\end{center}

A {\it minimal set} of the foliation  $\cal  F$ is  a closed saturated set   which does not contain different closed saturated set.

The following Plant's theorem describes  minimal sets of codimension one foliations with  leaves of subexponential growth. The growth  means the   growth of a  volume of the balls $B_x(R)\subset L_x \in \cal F$ as a function of the radius $R$.

\begin{thm} [\cite{P}] \label {P} \rm Let us suppose that a  codimension one $C^{2}$ - foliation of a compact manifold  has leaves  of  subexponential growth then  each minimal set of the foliation is or  whole manifold or a compact leaf .
\end{thm}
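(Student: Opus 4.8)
The plan is to combine the standard trichotomy for minimal sets of a codimension one foliation with the subexponential growth hypothesis, the whole difficulty being the exclusion of exceptional minimal sets. Fix a short compact transversal arc $T$ meeting the given minimal set $\mathcal M$, and set $K:=\mathcal M\cap T$, a nonempty compact subset of $T$ invariant under the holonomy pseudogroup. If $K$ has nonempty interior, then its saturation is an open saturated subset of $\mathcal M$; since the interior of a saturated set is again saturated, minimality and connectedness of $M$ force $\mathcal M=M$. If $K$ is discrete, hence finite, then $\mathcal M$ is a compact submanifold of $M$ saturated by $\mathcal F$, so by minimality it is a single compact leaf. In the only remaining case $K$ has empty interior and no isolated points (the latter because the holonomy orbits are dense in $K$ by minimality), so $K$ is a Cantor set and $\mathcal M$ is \emph{exceptional}. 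Thus it suffices to show that an exceptional minimal set is incompatible with subexponential growth.

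Suppose, for contradiction, that $\mathcal M$ is exceptional; after passing to the transversely orientable two-fold cover if necessary (which preserves both growth type and the minimal set), the one-sided holonomy $\Psi^+$ is defined. Here the $C^2$ hypothesis enters decisively through Sacksteder's theorem: a $C^2$ codimension one foliation with an exceptional minimal set possesses a contracting holonomy germ. Concretely, there is a point $x_0\in K$, a leaf $L_0\subset\mathcal M$ through $x_0$, and a loop $\gamma\subset L_0$ whose one-sided holonomy $h=\Psi^+([\gamma])$ fixes $x_0$ and satisfies $0<h'(x_0)<1$, so that $h$ contracts $[x_0,x_0+\eps)$ toward $x_0$. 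Because $\mathcal M$ is minimal, $L_0$ is dense in $\mathcal M$ and $L_0\cap T$ is dense in $K$; in particular one finds a second intersection point $x_1\in L_0\cap T$ with $x_0<x_1<x_0+\eps$. This is precisely the configuration of a \emph{resilient leaf}: $x_1$ lies on the same leaf as $x_0$ yet inside the basin of the contraction $h$.

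It then remains to convert resilience into exponential volume growth. Let $g$ be the holonomy germ of the pseudogroup that carries $x_0$ to $x_1$, realized by a plaque chain $\delta$ in $L_0$. After replacing $h$ by a suitable power one arranges that $h$ and $g$ satisfy a ping-pong condition, so that the sub-semigroup they generate is free; consequently the at least $2^{m}$ distinct words of length $m$ in $h,g$ send $x_0$ to $2^{m}$ distinct points of $L_0\cap T$ lying in a fixed compact subinterval of $K$. Each such word is realized by a plaque chain in $L_0$ of length at most $mC$, where $C:=\max(\ell(\gamma),\ell(\delta))$ is controlled by the diameters of the relatively compact charts of the good atlas. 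Hence the ball $B_{x_0}(R)\subset L_0$ of radius $R=mC$ meets at least $2^{R/C}$ distinct plaques, and since plaques have volume bounded below by compactness of $M$, one obtains $\mathrm{vol}\,B_{x_0}(R)\ge\const\cdot 2^{R/C}$. This exponential lower bound contradicts the assumption that every leaf has subexponential growth, so no exceptional minimal set exists, and the trichotomy above completes the proof.

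The step I expect to be the main obstacle is precisely this last passage, from the combinatorial growth of the holonomy semigroup to honest volume growth of $L_0$: one must ensure that distinct holonomy words are realized by plaque chains of uniformly bounded length and that the resulting plaques are genuinely distinct and uniformly separated in $L_0$, so that counting words faithfully counts volume. This is exactly where compactness of $M$ and the uniform geometry of the good $C^2$ atlas must be used quantitatively. It is also the point at which the $C^2$ regularity cannot be relaxed: in the $C^1$ category Denjoy-type exceptional minimal sets, whose holonomy even preserves a transverse measure, do occur, so Sacksteder's contraction — and with it the whole argument — genuinely requires two derivatives.
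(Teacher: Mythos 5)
This statement is quoted from Plante \cite{P}; the paper itself offers no proof of it, so there is nothing internal to compare your argument against. On its own terms, your proposal is the standard modern proof and is essentially correct: the trichotomy for minimal sets of a codimension one foliation (all of $M$, a compact leaf, or exceptional) is classical, and the exclusion of the exceptional case via Sacksteder's theorem (a $C^2$ exceptional minimal set carries a linearly contracting holonomy germ, hence a resilient leaf, since the leaf through the fixed point accumulates on itself inside the basin of contraction) followed by ``resilient implies exponential growth'' is exactly how this result is usually derived today. Two remarks. First, the entire weight of the argument rests on Sacksteder's theorem, which you invoke as a black box; that is legitimate, but it means your proof is no more self-contained than the citation it replaces, and in the $C^2$ category Sacksteder's contraction \emph{is} the hard content (as you correctly observe via the Denjoy counterexamples in $C^1$). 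Second, your ping-pong derivation of exponential growth from resilience reproves what the paper already records as Theorem \ref{res} (cited to Hector--Hirsch), so within this paper's framework that passage could simply be replaced by a reference; your sketch of it is sound in outline, and you rightly flag the one genuinely delicate point, namely that distinct holonomy words yield points in distinct plaques realized by plaque chains of uniformly bounded length, with bounded multiplicity of overlapping charts, so that orbit growth bounds volume growth from below. Note also that Plante's original route in \cite{P} goes through holonomy-invariant transverse measures constructed from subexponential growth by an averaging argument, which is a genuinely different (and more quantitative) way of exploiting the growth hypothesis than your direct contradiction with Theorem \ref{res}; both approaches, however, ultimately rely on Sacksteder. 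Minor quibbles: the preimage of $\mathcal M$ in the transversely orientable double cover need not itself be minimal (though it contains an exceptional minimal set, which suffices), and the freeness of the semigroup generated by $h$ and $g$ does not by itself give distinct orbit points --- it is the ping-pong configuration that does both jobs at once. None of these affects the correctness of the overall argument.
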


We say that $\cal F$ is  a  {\it nonnegative Ricci curvature foliation} if each leaf of $\cal F$ has nonnegative Ricci curvature in the induced metric.

 We obtain the following corollary.

\begin{cor}\label{cor1} \rm One of the following holds: 
		
		1) All leaves of a codimension one nonnegative Ricci curvature  $C^{2} $ - foliation are  dense.
		
		2)The closure of each leaf contains a compact leaf.
\end{cor}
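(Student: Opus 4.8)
The plan is to combine Bishop's theorem with Plant's theorem \ref{P} and then deduce the stated dichotomy by a short point-set argument on minimal sets. First I would observe that since $M$ is closed, every leaf $L$ of $\cal F$ is a complete Riemannian manifold in the induced metric. As $\cal F$ is a nonnegative Ricci curvature foliation, each such $L$ is therefore a complete manifold with $Ric(L)\geq 0$, so Bishop's theorem applies and the volume of the balls $B_x(R)\subset L$ grows at most polynomially in $R$; in particular every leaf has subexponential growth. Hence the hypothesis of Plant's theorem \ref{P} is met, and we conclude that each minimal set of $\cal F$ is either the whole manifold $M$ or a compact leaf.

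Next I would record the per-leaf consequence. Fix any leaf $L$. Its closure $\overline{L}$ is a nonempty closed saturated subset of the compact manifold $M$, so by a standard Zorn's lemma argument (using compactness of $M$ to guarantee that the intersection of a descending chain of nonempty closed saturated sets is again nonempty, closed and saturated) it contains a minimal set $\mu$. By the previous paragraph $\mu$ is either $M$ or a compact leaf. If $\mu=M$, then $\overline{L}=M$ and $L$ is dense; if $\mu$ is a compact leaf, then $\overline{L}$ contains a compact leaf. Thus for every leaf $L$, either $L$ is dense or $\overline{L}$ contains a compact leaf.

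Finally I would upgrade this to the global dichotomy. Suppose alternative 1) fails, i.e. there exists a leaf $L_0$ that is not dense. Then $\overline{L_0}\neq M$, so the minimal set it contains cannot be $M$ and must be a compact leaf $C$; in particular $\cal F$ possesses at least one compact leaf. Now let $L$ be an arbitrary leaf. Either $\overline{L}$ already contains a compact leaf, or $L$ is dense, in which case $\overline{L}=M\supseteq C$ and again $\overline{L}$ contains a compact leaf. Hence alternative 2) holds for every leaf, which completes the argument.

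I expect the only genuinely delicate point to be the completeness of the leaves, which is what licenses the use of Bishop's theorem; it rests on the compactness of $M$ together with the $C^2$-regularity (and local finiteness of the good atlas) of $\cal F$. Everything else reduces to a direct application of Plant's theorem and the elementary observation that a dense leaf has closure equal to $M$ and so automatically contains any compact leaf that the foliation may have.
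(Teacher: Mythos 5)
Your proposal is correct and follows exactly the route the paper intends: the paper states this corollary immediately after Plante's theorem without writing out a proof, the implicit argument being that Bishop's theorem gives the leaves polynomial (hence subexponential) volume growth, so every minimal set is $M$ or a compact leaf, and the closure of any leaf contains a minimal set. Your write-up merely makes explicit the standard details (completeness of leaves, existence of a minimal set in a closed saturated set via Zorn's lemma) that the paper takes for granted.
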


A leaf $L$ of transversely oriented codimension one foliation $\cal F$ is called {\it resilient} if  there is a transversal arc $[x,y)$, $x\in L$,  and a loop $\sigma $ such that $\Gamma_ {\sigma} :[x,y)\to [x,y)$ is  the contraction to $x$ and $L\cap (x,y) \not=\emptyset$.

It turns out that the following theorem holds.

\begin{thm}[\cite{HH}] \label{res} \rm
	Let $M$ be a compact manifold and $\cal F$ be a codimension one  $C^2$-foliation of $M$. A  resilient leaf of $\cal F$  must have exponential growth. 
\end{thm}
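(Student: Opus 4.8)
The plan is to extract from the resilience hypothesis a ping-pong pair of holonomy contractions and then convert the resulting exponential orbit growth into exponential volume growth of the leaf. By definition the resilient leaf $L$ comes with a transversal arc which I normalise as $[0,c)$, with $0$ corresponding to the point $x\in L$, a loop $\sigma\subset L$ whose holonomy $h:=\Gamma_{\sigma}\colon[0,c)\to[0,c)$ is a contraction to $0$ (so $h(0)=0$ and $h(t)<t$ for $t>0$), and a point $z\in L\cap(0,c)$. Since $x$ and $z$ lie on the same leaf $L$, a path in $L$ from $x$ to $z$ defines a holonomy germ $g$ with $g(0)=z$, injective on a neighbourhood of $0$. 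The crucial bookkeeping fact is that $\sigma$ and the path defining $g$ both lie in $L$, so each of $h,g$ — and hence every word in $h,g$ — maps every leaf to itself; in particular the whole orbit $\{h^{n}(z)\}\subset L$ accumulates at $0$ along the transversal.

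First I would build the ping-pong pair. Fix a small interval $J=[0,b]$ inside the domain of $g$. The map $g$ carries $J$ onto an interval around $z$ that is bounded away from $0$, so for a suitable power $m$ the composite $f_{1}:=h^{m}\circ g$ sends $J$ into a subinterval of $J$ centred near $h^{m}(z)>0$. Choosing then a large power $k$ with $h^{k}(b)<\min f_{1}(J)$, the contraction $f_{0}:=h^{k}$ sends $J$ onto $[0,h^{k}(b)]$, which lies strictly below $f_{1}(J)$. Thus $f_{0},f_{1}$ are honest holonomy maps with $f_{0}(J),f_{1}(J)\subset J$ and $f_{0}(J)\cap f_{1}(J)=\emptyset$: a Schottky pair. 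A standard ping-pong argument then shows that for words $w\in\{0,1\}^{n}$ the images $f_{w}(J):=f_{w_{1}}\circ\cdots\circ f_{w_{n}}(J)$ are pairwise disjoint subintervals of $J$, so the semigroup generated by $f_{0},f_{1}$ is free. Taking a base point $t_{0}=h^{n_{0}}(z)\in L\cap\Int J$ (possible since $h^{n_{0}}(z)\to0$), leaf-invariance of the $f_{w}$ yields $2^{n}$ distinct points $f_{w}(t_{0})\in L\cap J$.

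Finally I would translate this into volume growth. Because $M$ is compact and the foliated atlas is good and finite, the generators $h,g$ are realised by leaf-paths of uniformly bounded length, and the same bound controls their parallel lifts through nearby points; hence each point $f_{w}(t_{0})$ with $|w|=n$ is joined to $t_{0}$ by a path in $L$ of length at most $Cn$ and lies on a distinct plaque meeting the transversal. With the uniform lower bound $v_{0}>0$ on plaque volume and the local finiteness of the atlas (bounding the overlaps), the ball $B_{L}(t_{0},Cn)$ then contains at least $2^{n}$ plaques of controlled overlap, so $\mathrm{vol}\,B_{L}(t_{0},R)\ge c\,2^{R/C}$. Since the growth type of a leaf is independent of the base point, $L$ has exponential growth.

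The main obstacle is the ping-pong construction: one must produce two genuine holonomy maps that are self-maps of a single common interval with disjoint images, and the difficulty is precisely the domain bookkeeping, since the naive conjugate $g\circ h\circ g^{-1}$ (a contraction to $z$) is not defined at $0$ and so does not share a domain with $h$. Composing with powers of the contraction $h$ to pull everything back into $J$, as above, is what resolves this. The remaining ingredients — the disjointness step in the ping-pong, and the dictionary between orbit growth and leaf-volume growth on a compact foliated manifold — are then standard once the uniform geometric bounds coming from compactness are in place.
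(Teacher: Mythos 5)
The paper does not prove this statement at all: it is quoted verbatim from Hector--Hirsch \cite{HH} and used as a black box (its only role here is Corollary \ref{cor11}). Your argument is essentially the standard proof from that reference: extract from the resilience data a pair of holonomy maps of a common transversal interval with disjoint images, run ping-pong to get $2^n$ orbit points at word length $n$, and convert pseudogroup orbit growth into leaf volume growth via the uniform plaque estimates available on a compact foliated manifold. The reasoning is correct; the only place a careful write-up needs an extra word is that the holonomy $g$ along the leaf path from $x$ to $z$ a priori lands in a transversal at $z$, which you must identify with a subarc of $(0,c)$ so that composing with $h^m$ is legitimate --- this is the standard pseudogroup convention and causes no difficulty, and your device of post-composing with high powers of the contraction $h$ correctly handles the remaining domain bookkeeping.
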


\begin{cor}\label{cor11} \rm A codimension one $C^2$-foliation of nonnegative Ricci curvature of a compact manifold does not contain resilient leaves.
\end{cor}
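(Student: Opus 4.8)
The plan is to obtain a contradiction from the two mutually incompatible growth rates that a resilient leaf of such a foliation would be forced to have. Suppose $L$ is a resilient leaf of the codimension one $C^2$-foliation $\cal F$. First I would apply Theorem \ref{res}: because $M$ is compact, any resilient leaf has exponential growth, so $L$ would grow exponentially, meaning that the volume of the balls $B_x(R)\subset L$ grows exponentially in $R$.

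Next I would note that, with the induced metric, $L$ is a \emph{complete} Riemannian manifold. This is the only point that requires care, and it is precisely where compactness of $M$ enters: a leaf of a foliation of a closed manifold is complete in the induced metric, since any unit-speed geodesic of $L$ is a curve lying in the compact manifold $M$ and hence is defined for all time. Because $\cal F$ is a nonnegative Ricci curvature foliation, $L$ carries an induced metric with $Ric(L)\geq 0$; together with completeness this places $L$ squarely within the scope of Bishop's theorem.

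Finally I would invoke Bishop's theorem, which asserts that a complete manifold of nonnegative Ricci curvature has polynomial volume growth of balls. Polynomial growth is subexponential and therefore cannot coincide with the exponential growth forced by resilience. This contradiction rules out resilient leaves and establishes the Corollary. I expect the only genuine obstacle to be the completeness verification: once one is certain that Bishop's estimate --- stated for complete manifolds --- really applies to the leaf, the conclusion is immediate, and no additional construction or estimate is needed.
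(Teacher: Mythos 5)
Your proposal is correct and is exactly the argument the paper intends (the corollary is stated without an explicit proof, immediately after Theorem \ref{res}, with Bishop's theorem already set up in the introduction): resilience forces exponential volume growth, while completeness of the leaf in the induced metric plus $Ric\geq 0$ forces polynomial growth by Bishop, a contradiction. Your care about completeness of the leaf is the right point to flag and is handled correctly.
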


\begin{center}
	\subsection{ Maps to a foliation}
\end{center}

Let us  recall the definition of  an exponential map along a foliation $\cal F$ of  riemannian manifold $M$. 

We associate with each vector $a$ tangent to a leaf $L_x$ at the point $x\in M$ the end of the geodesic of $L_x$ of length $|a|$,   starting at the initial point $x$ in the direction $a$.  Since the foliation is supposed smooth , the constructed exponential map  $\exp^{\cal F}: T^{\cal F}M\to M$ is  smooth too.
By analogous   $\exp^\perp$ denotes the orthogonal exponential map which associates with each orthogonal  vector $p$ at  $x$ the end of the orthogonal to $\cal F$ trajectory of length $|p|$  in the direction $p$ with the initial point $x$. 
Let us consider the composition of continuous maps 
\begin{equation}\label{eq10} 
F:I\times D^{n-1}(R) \overset{i}\to V(J,R)\overset {\exp^{\cal F}}\to M,
\end{equation}
 where $I=[0,1]$,  $D^{n-1}(R)$  is euclidean ball of radius  $R$  and    $i: I\times D^{n-1}(R) \to V(J,R)\subset T^{\cal F}M$ is an  homeomorphism on the set $V(J,R)=\{(x,v_x): x\in J =  i(I\times 0), v_x \in T^{\F}_xM, |v_x|\leq R \}$. The interval  $J=i(I\times 0)$ is the embedded orthogonal to $\F$  interval.     For each $x\in \Int J$ call the set $F(I,D^{n-1}(R))$ a $V(J,R)$-neighborhood of $x$.  	
\newline

\begin{rem}\label {rem7} \rm
	Since a length of $J$ and $R$ can be taken arbitrary small, 
	for any point $x$ there exists  $J$ containing $x$ and $V(J,\varepsilon)$-neighborhood W of $x$    which is contained in some foliated chart $(U_{\alpha},\phi_{\alpha})$ of a foliation. The restriction  of $\phi_{\alpha}$ on the $U'_{\alpha} \subset W$ give us foliated chard $(U'_{\alpha},\phi'_{\alpha})$ with diameters of local leaves less then $\varepsilon$, where $U'_{\alpha}$  is a neighborhood of $x$ homeomorphic to $(-1,1)^n$.  
	\end{rem}

\begin{prop}\label{pr1}  \rm Let $F:I\times D^{n-1}(R)\to M$ as in \eqref{eq10}. Then there exists $\delta>0$  such that  for each   $0\leq t<\delta$ the balls 
	$B_t(R):=\exp^{\cal F}\circ i(t\times D^{n-1}(R))$ belong to  arbitrary $\varepsilon$ - collar of the ball $B_0(R+\varepsilon)\supset
	B_0(R)$ . By $\varepsilon$ - collar of $B_0(R+\varepsilon)$ we understand the set 
	$\Delta=\exp^\perp(a), \  |a|<\varepsilon$,  where $a$ are normal to  $B_0(R+\varepsilon)$ vectors in the direction of the a half-space defined by $J=F(I\times 0)$, and $\varepsilon$ is small enough to deformation retraction $pr:\Delta \to B_0(R+\varepsilon)$ would be well defined, where $pr$ associates with the point $x\in\Delta$ the  initial point of the orthogonal  trajectory.  	
\end{prop}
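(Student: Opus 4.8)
The statement is a uniform--continuity assertion: as the basepoint $F(t,0)$ slides along the orthogonal interval $J$ toward $x=F(0,0)$, the leaf ball $B_t(R)$ sitting in the nearby leaf $L_{F(t,0)}$ must fall into the one--sided collar obtained by thickening the slightly larger leaf ball $B_0(R+\varepsilon)\subset L_x$. The plan is to combine the smoothness of $\exp^{\cal F}$ with the tubular--neighborhood structure that, by hypothesis, makes $pr:\Delta\to B_0(R+\varepsilon)$ a well--defined retraction. Fix such an $\varepsilon>0$. Since $B_0(R)$ is a compact subset of $L_x$ lying in the interior of $B_0(R+\varepsilon)$, the orthogonal exponential $\exp^\perp$ is a diffeomorphism from an $\varepsilon$--neighborhood of the zero section of the (one--sided, $J$--directed) normal bundle of $B_0(R+\varepsilon)$ onto $\Delta$; its inverse assigns to each point of $\Delta$ a foot point in $B_0(R+\varepsilon)$ and an orthogonal height in $[0,\varepsilon)$.

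Next I would use that $F$ is smooth, hence uniformly continuous, on the compact domain $I\times\overline{D^{n-1}(R)}$: given $\eta>0$ there is $\delta>0$ with $d_M(F(t,v),F(0,v))<\eta$ for all $t\in[0,\delta)$ and all $v$. Because $F(0,v)\in B_0(R)$ we have $d_{L_x}(x,F(0,v))\le R$, and $B_0(R)$ lies a definite distance inside $B_0(R+\varepsilon)$. Choosing $\eta$ small enough (uniformly in $v$ by compactness) guarantees that each $F(t,v)$ lies in a fixed tubular neighborhood of $L_x$ on which orthogonal projection to $L_x$ is defined, so that flowing from $F(t,v)$ back along its orthogonal trajectory reaches $L_x$ at a foot point $q$ with $d_{L_x}(q,F(0,v))<\varepsilon$ and orthogonal height $<\varepsilon$. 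Here I use that over the compact region the intrinsic leaf metric and the ambient metric are comparable, so ambient closeness of $q$ to $F(0,v)$ forces closeness in $L_x$.

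It then follows that $d_{L_x}(x,q)\le d_{L_x}(x,F(0,v))+d_{L_x}(F(0,v),q)<R+\varepsilon$, so $q\in B_0(R+\varepsilon)$; and for $t>0$ the displacement of $F(t,0)$ along $J$ places $F(t,v)$ on the half--space side singled out in the definition of $\Delta$. Hence $F(t,v)=\exp^\perp(a)$ for a normal vector $a$ at $q\in B_0(R+\varepsilon)$ with $|a|<\varepsilon$ on the $J$--side, i.e. $F(t,v)\in\Delta$. As $v$ ranges over $\overline{D^{n-1}(R)}$ this gives $B_t(R)\subset\Delta$ for every $t\in[0,\delta)$, which is the claim.

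The main obstacle is the comparison \emph{across different leaves}: $B_t(R)$ is measured intrinsically in $L_{F(t,0)}$, whereas $\Delta$ is built by orthogonal flow off $L_x$, so one must control the foot point $q$ and its orthogonal height simultaneously and uniformly in $v$. The margin estimate $d_{L_x}(x,q)<R+\varepsilon$ is exactly what forces the use of the enlarged ball $B_0(R+\varepsilon)$ rather than $B_0(R)$. A secondary technical point is that leaves are in general only immersed in $M$, so the orthogonal projection should be carried out in the pullback normal bundle over the parametrizing domain rather than on the possibly non--embedded image; the compactness of $I\times\overline{D^{n-1}(R)}$ is what keeps all the estimates uniform.
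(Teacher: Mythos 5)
Your proposal is correct and follows essentially the same route as the paper: the paper's own proof consists of the single observation that $I\times D^{n-1}(R)$ is compact and $F$ is uniformly continuous, which is exactly the engine of your argument. Your write-up simply supplies the details (tubular-neighborhood projection, control of the foot point and orthogonal height, the role of the enlarged ball $B_0(R+\varepsilon)$) that the paper leaves implicit.
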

\begin{proof}
	The proof follows from the fact that   $I\times D^{n-1}(R)$  is a compact for each $R\in \RR_+$ and $F$ is uniformly continuous. 
\end{proof}

Recall also the  following definition which was done by S. Adams and G. Stuck. 

\begin{defn}\label{9} \rm
	Let $\cal F$ be a foliation of riemannian  manifold $M$. 
	Let $X$ be a connected locally compact Hausdorff space. Define $C_{\cal F} (X,M)$ to be the space of continuous maps of $X$  into leaves of $\cal F$. Let us  consider $C_{\cal F}$ (X,M) as a subspace of the space $C(X,M)$	of continuous maps of $X$ into $M$, where the latter is endowed with the topology of uniform convergence on compact sets. 
\end{defn}

They have the following important result.

\begin{thm}[\cite{AS}]\label{AS} \rm
	Let $M$,  $\cal F$ and $X$ as in the definition \ref{9}.
	Then $C_{\cal F}(X,M)$  is a closed subspace of $C(X,M)$.
\end{thm}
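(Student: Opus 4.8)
The plan is to reduce the closedness of $C_{\cal F}(X,M)$ to a purely local statement about plaques and then to pass to the limit inside a single foliated chart. Recall that a map $f\in C(X,M)$ lies in $C_{\cal F}(X,M)$ precisely when its image is contained in one leaf $L$ and $f\colon X\to L$ is continuous for the intrinsic leaf topology; equivalently, $f$ obeys the local criterion that every $x\in X$ has a neighborhood $W$ and a foliated chart $(U,\varphi)$ with $f(W)\subset U$ on which the transverse coordinate $\tau:=\pi\circ\varphi\colon U\to(-1,1)^q$ is constant, so that $f(W)$ lies in a single plaque $P=\tau^{-1}(\mathrm{const})$. So I would take $f_n\in C_{\cal F}(X,M)$ with $f_n\to f$ uniformly on compact sets and produce this local data for the limit $f$.

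First I would fix $x_0\in X$, a foliated chart $(U,\varphi)$ around $f(x_0)$ with $\varphi(U)=(-1,1)^n$, and a smaller chart $U'$ with $\overline{U'}\subset U$. Using local compactness of $X$, I choose a compact connected neighborhood $K\ni x_0$ with $f(K)\subset U'$. Since $\overline{U'}$ lies a positive distance from $M\setminus U$ and $f_n\to f$ uniformly on the compact set $K$, one gets $f_n(K)\subset U$ for all large $n$. Now each $f_n$ maps $X$ into a single leaf $L_n$ and is continuous for the leaf topology, so $f_n(K)$ is a connected subset of $L_n\cap U$ and therefore lies in one plaque; hence $\tau\circ f_n\equiv c_n$ is constant on $K$. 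Because $\tau$ is uniformly continuous on $\overline U$, we have $\tau\circ f_n\to\tau\circ f$ uniformly on $K$, and a uniform limit of the constant functions $c_n$ is again constant; thus $\tau\circ f\equiv c_\infty$ on $K$, i.e.\ $f(K)$ lies in the single plaque $\tau^{-1}(c_\infty)$. This is exactly the local criterion for $f$ at $x_0$.

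It then remains to globalize and to verify continuity into the leaf. The leaf through $f(x_0)$ contains the whole plaque $\tau^{-1}(c_\infty)\supset f(K)$, so the assignment $x\mapsto(\text{leaf through }f(x))$ is constant on a neighborhood of each point; being locally constant on the connected space $X$, it is constant, whence $f(X)$ lies in one leaf $L_0$. Finally, on each such $K$ the plaque $\tau^{-1}(c_\infty)$ is embedded in $M$ and in $L_0$ with the same topology, so $f|_K$, continuous into $M$ with image in this plaque, is continuous into $L_0$ for the leaf topology; as $x_0$ was arbitrary, $f\colon X\to L_0$ is continuous and $f\in C_{\cal F}(X,M)$. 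I expect the decisive step to be the plaque confinement in the middle paragraph: uniform convergence on the connected compact set $K$ is what forces the transverse coordinates $c_n$ of the approximating plaques to converge to a single value, so that the limit cannot smear across distinct plaques of a recurrent (e.g.\ dense) leaf. This is precisely where the interplay between connectedness of $K$, uniform rather than merely pointwise convergence, and the distinction between the intrinsic leaf topology and the ambient topology of $M$ must be handled with care, and where the local compactness hypothesis on $X$ is used to secure the compact connected neighborhoods $K$.
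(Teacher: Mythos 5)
The paper itself offers no proof of Theorem \ref{AS}: it is quoted from Adams and Stuck \cite{AS} and used as a black box, so there is no in-paper argument to compare yours against. Judged on its own merits, your proof follows the natural (and, as far as the standard literature goes, the expected) line: confine $f_n(K)$ to a single plaque of a foliated chart, let the transverse coordinates $c_n$ converge to put $f(K)$ into a single plaque, then globalize via local constancy of the leaf assignment on the connected space $X$, and read off leaf-continuity from the fact that a plaque is embedded with its leaf topology equal to the ambient subspace topology. These steps are all sound, including the one you single out as decisive: a subset of $L_n\cap U$ that is connected for the leaf topology does lie in one plaque, since the plaques are the open-and-closed pieces of $L_n\cap U$ in that topology, and pointwise convergence of $\tau\circ f_n$ to $\tau\circ f$ already suffices to identify the limiting constant.

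The one genuine gap is at the very first reduction: you ``choose a compact connected neighborhood $K\ni x_0$'' and attribute its existence to local compactness of $X$. Local compactness of a Hausdorff space yields arbitrarily small compact neighborhoods, but not connected ones; Definition \ref{9} assumes only that $X$ is connected, locally compact and Hausdorff, and such a space need not be locally connected (the comb space and the closed topologist's sine curve are compact, connected, Hausdorff, yet some of their points admit no small connected neighborhoods). Your plaque-confinement step genuinely uses connectedness of $K$: with a disconnected $K$ the functions $\tau\circ f_n$ are merely locally constant, and a uniform limit of locally constant functions on a disconnected compact set need not be locally constant, so the limit could a priori smear across plaques exactly in the way you are trying to exclude. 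As written, therefore, your argument proves the theorem under the additional hypothesis that $X$ is locally connected. This is harmless for every application in the present paper, where $X$ is $I\times D^{n-1}(R)$ or an interval, but to establish the statement exactly as formulated you would need either to add local connectedness to Definition \ref{9} or to replace the connected-neighborhood device with an argument that exploits the global connectedness of $X$ (for instance, a chain-of-charts argument joining the components of $K$), which your write-up does not supply.
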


\begin{center}
		\subsection{Nonnegative Ricci curvature foliations}
\end{center}

	\begin{prop}\label{pr2} \rm {Let $\cal F$   be a codimension one transversely orientable $C^2$-foliation of  nonnegative Ricci curvature of a closed orientable manifold $M$.  Then  $\cal F$ is a foliation almost without holonomy.}
	\end{prop}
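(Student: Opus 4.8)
The plan is to prove that every \emph{noncompact} leaf $L$ has trivial holonomy by studying the dynamics of the holonomy germs on a transversal. First I would record the two geometric inputs. Since $M$ is closed, each leaf is complete in the induced metric, so by the Bishop theorem a leaf of nonnegative Ricci curvature has polynomial volume growth, and in particular subexponential growth; hence Corollary \ref{cor1} applies and for each leaf either all leaves are dense or the closure of $L$ contains a compact leaf, while Corollary \ref{cor11} guarantees that $\cal F$ has no resilient leaves. The holonomy homomorphism $\Psi\colon\pi_1(L)\to H(L)$ is onto, and since (Milnor) every finitely generated subgroup of the fundamental group of a nonnegative Ricci manifold has polynomial growth, every finitely generated subgroup of $H(L)$ has polynomial growth; by Theorem \ref{PTh} each such subgroup is torsion free abelian, so $H(L)$ is torsion free abelian.

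Now assume for contradiction that a noncompact leaf $L$ has $H(L)\neq\{1\}$. Using transverse orientability I would represent holonomy by one-sided germs $h=\Gamma_\sigma\colon[0,\varepsilon)\to[0,\delta)$ fixing the point $0=L\cap T$ on a transversal $T$, and fix a nontrivial such germ. The key observation is that $\Gamma_\sigma$ keeps points on a single leaf: for $x\in(0,\varepsilon)$ the points $x,h(x),h^2(x),\dots$ all lie on the leaf $L_x$. Hence, if $h$ is a topological contraction toward $0$ (no interior fixed points, $h(x)<x$), the orbit $h^n(x)\to 0$ shows that $L_x$ accumulates on the transversal point $0\in L$. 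In the case where all leaves are dense, $L$ itself accumulates on $0$, so the pair $(T,\sigma)$ exhibits $L$ as a resilient leaf, contradicting Corollary \ref{cor11}; the recurrence supplied by density together with the contraction is exactly the resilient configuration. The shadowing of $\sigma$ by the nearby leaves $L_x$ and the passage to limiting maps into $L$ would be made rigorous via Proposition \ref{pr1} and the closedness of $C_{\cal F}(X,M)$ in Theorem \ref{AS}.

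It remains to treat a proper noncompact leaf $L$ whose closure contains a compact leaf $F$. Here $L$ need not be recurrent, so a contraction in $H(L)$ does not directly make $L$ resilient; instead I would analyze the foliation near $F$. Since $H(F)$ is abelian (the growth argument above applies to $F$ as well), Nishimori's Theorem \ref{N} applies to a one-sided tubular neighborhood of $F$ and splits into its three listed cases; in each case the local model $X_f$ (respectively the density or new-compact-leaf alternatives) determines the germ $f$ at $0$ and hence the holonomy of the nearby noncompact leaves. Transporting this local normal form along $L$ and using that $L$ has subexponential growth, one shows that a nontrivial holonomy of $L$ would produce either a resilient leaf inside the corresponding block (again contradicting Corollary \ref{cor11}) or force $L$ to close up, contradicting noncompactness. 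Concluding that $H(L)=\{1\}$ for every noncompact leaf yields that $\cal F$ is almost without holonomy.

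The main obstacle is precisely this last step: ruling out nontrivial holonomy for a \emph{proper} noncompact leaf, where resilience is not automatic because the leaf is non-recurrent. The delicate points are, first, handling germs tangent to the identity (parabolic contractions, and germs with fixed points accumulating at $0$), which are invisible to the linearized holonomy and are permitted by the torsion free abelian conclusion of Theorem \ref{PTh}; and second, converting the contracting dynamics of a nearby recurrent leaf into a genuine resilient configuration in the sense of the definition, rather than a mere convergent orbit. I expect both to be resolved by combining the holonomy-invariant transverse measure coming from subexponential growth, which forbids strict contractions, with Kopell-type rigidity for commuting $C^2$ germs and with the explicit normal forms of Theorem \ref{N} near the compact leaves contained in $\overline{L}$.
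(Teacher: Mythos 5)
Your setup (polynomial growth of leaves via Milnor/Bishop, hence abelian holonomy via Theorem \ref{PTh}, no resilient leaves via Corollary \ref{cor11}, and the dichotomy of Corollary \ref{cor1}) matches the paper, and your treatment of the locally dense case --- a contracting one-sided germ plus recurrence of $L$ exhibits $L$ as resilient --- is exactly the paper's argument. But there is a genuine gap where you yourself flag one: the proper noncompact leaf. You propose to close it with a holonomy-invariant transverse measure, Kopell-type rigidity for commuting $C^2$ germs, and the normal forms of Theorem \ref{N}, but you do not actually derive a contradiction, and the tools you name do not obviously suffice: subexponential growth of a single leaf does not by itself produce an invariant transverse measure ruling out contractions, and Kopell's lemma constrains centralizers of contractions rather than forbidding the contraction itself. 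The alternative you mention, that $L$ is ``forced to close up,'' is not substantiated.

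The paper closes this case by a different and quite specific mechanism that your proposal never invokes: the \emph{ends} of a leaf. If the germ of $[\gamma]$ is a contraction and $L$ is proper, then some other noncompact leaf $P$ is wound onto $L$ (the orbit $h^n(x)\to 0$ lies on $P$), and Corollary \ref{cor1} supplies a compact leaf $K\subset \bar L\cap \bar P$. Nishimori's normal form (part 3 of Theorem \ref{N}) near $K$ then forces the spiralling leaf $P$ to have infinitely many ends, which contradicts part 1 of Theorem \ref{CG}: a complete manifold of nonnegative Ricci curvature has at most two ends. This is the missing idea. The paper also disposes of your other ``delicate point'' (fixed points of $\Gamma_\gamma$ accumulating at $0$) concretely: either the germ is trivial, or one passes to a fixed point $a$ whose leaf $L'$ carries a genuinely contracting germ on $[a,\delta)$, and one chooses $\varepsilon$ small enough that $L'$ is noncompact (the union of compact leaves being closed), reducing to the contraction case. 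As written, your proof establishes the proposition only under the additional hypothesis that all leaves are dense.
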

	
	\begin{proof}
		As noted at the beginning of the article,  Milnor proved   that a  fundamental group of a compact nonnegative Ricci curvature manifold  has a polynomial growth in the words metric. Therefore by  theorem \ref{PTh}, each compact leaf has an abelian holonomy and the  theorem  \ref{N} is applied. Suppose there exists  noncompact leaf $L$ with nontrivial holonomy. Since $\cal F$ supposed transversely oriented the holonomy must be infinite. Let $\gamma $ be a closed path in  $L$ which represents  the nontrivial holonomy element of  $\pi _{1} (L)$.
		\newline
		
		\textit{Case 1.}  The one-sided holonomy $\Psi ^{+} ([\gamma ])$ is nontrivial and the holonomy map $\Gamma_{\gamma} :[0,\varepsilon )\to [0,\varepsilon ')$ does not contain fixed points in $[0,\varepsilon )$ for some  $\varepsilon $.
		\newline
		
		In this case  $\Psi ^{+} (\pm [\gamma ])$ is represented by contracting map. If $L$ is locally dense, clearly that $L$ must be a  resilient leaf that is impossible by corollary \ref{cor11}.  Otherwise another noncompact leaf $P$ is wound  on $L$ and by corollary \ref{cor1} there exists a compact leaf $K\subset \bar{L}\bigcap \bar{P}$. From  part 3 of theorem \ref{N} simply follows that the leaf  $P$ must have infinitely many ends that contradicts to  theorem \ref{CG}.
		\newline
		
		\textit{Case 2.} The  half-interval  $[0,\varepsilon )$ contains the convergent to zero sequence of  fixed points  $\{ F_{i} \} $ of the holonomy map  $\Gamma_{\gamma} :[0,\varepsilon )\to [0,\varepsilon ')$.
		\newline
		
		Since $\{ F_{i} \} \bigcap [0,F_{k} ]$ is closed in $[0,\varepsilon )$, where $F_{k} \in \{ F_{i} \} \bigcap [0,\varepsilon )$,  then either holonomy of  $L$ must be trivial or we find the half-interval $[a,\delta )\subset [0,\varepsilon )$ on which $\Gamma_{\gamma'} $ is contracting.  The leaf $L'$  corresponding to the point  $a\in [0,\varepsilon )$ has contracting holonomy on  $\gamma '\subset L'$, where $\gamma '$ is a closed path, corresponding to the fixed point $a$ of the map $\Gamma_{\gamma} $. Since the set of compact leaves is closed  (see \cite{T}) and  $M$ is normal topological space, we can choose  $\varepsilon $  small enough to propose that the leaf  $L'$ is noncompact. 
		Thus we arrive to a considered case $1$ and the proposition is proved. 
	\end{proof}

	\begin{cor}\label{Ric} \rm
		The structure of codimension one transversely oriented nonnegative Ricci curvature foliations of compact oriented manifolds is  described by theorem \ref{Im}. 
	\end{cor}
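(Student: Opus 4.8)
The plan is to read this corollary off directly from Proposition~\ref{pr2} together with Theorem~\ref{Im}; the substantive analytic work has already been carried out in the former, so the corollary is essentially a matter of matching hypotheses and invoking the structure theory. The first step is precisely that matching. The corollary concerns a codimension one, transversely oriented, nonnegative Ricci curvature foliation of a compact oriented manifold, which is exactly the setting of Proposition~\ref{pr2}: transverse orientability versus a chosen transverse orientation differ only by a choice, ``compact'' here means closed (the foliated manifolds throughout carry no boundary), and $C^2$-smoothness is the standing assumption on the foliated atlas. Applying Proposition~\ref{pr2} then yields that $\cal F$ is a foliation almost without holonomy, i.e.\ every noncompact leaf has trivial holonomy.

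With this in hand, the structure theorem applies without any further argument. Theorem~\ref{Im} assumes only that $\cal F$ be a codimension one almost without holonomy foliation of a closed manifold, and for each noncompact leaf $L$ it produces the dichotomy between the holonomy-free case~a) and the elementary-block case~b), together with the corresponding group extensions and the splittings of the universal cover. Feeding our foliation into Theorem~\ref{Im} therefore gives precisely the asserted structural description.

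I expect no genuine obstacle here: the entire content of the corollary is the observation that the implication ``nonnegative Ricci curvature $\Rightarrow$ almost without holonomy'', established in Proposition~\ref{pr2}, places these foliations within the scope of the Novikov--Imanishi structure theory recorded in Theorem~\ref{Im}. The only caveats worth spelling out are the routine hypothesis matchings above and the fact that compact leaves are governed separately by Nishimori's Theorem~\ref{N}; since Theorem~\ref{Im} addresses noncompact leaves, it covers exactly the part of the picture not already described by Theorem~\ref{N}.
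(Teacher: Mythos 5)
Your proposal is correct and is exactly the (implicit) argument of the paper: Proposition~\ref{pr2} shows the foliation is almost without holonomy, and Theorem~\ref{Im} then applies verbatim. The hypothesis-matching remarks and the aside about compact leaves being handled by Theorem~\ref{N} are accurate but add nothing beyond what the paper intends.
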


	\begin{prop}\label{str} \rm
		Let $\cal F$ be a codimension one transversely oriented $C^2$-foliation almost without holonomy (in particular foliation of nonnegative Ricci curvature) of oriented riemannian manifold $M$. Let $\{[x_i,y_i]\}\subset L_i\in \cal F$ be a sequence of shortest (in $L_i$) geodesic segments  of length $l_i\to \infty$  and  $\{z_i\in [x_i,y_i]\}$ be a sequence of points converging to some point $z\in L$, where $L$ is noncompact leaf of $\cal F$. Suppose  that the lengths of the segments  $\{[x_i,z_i]\}$ and $\{[z_i,y_i]\}$ approach  to $\infty$. Then there is a subsequence of   $\{[x_i,y_i]\}$ converging to a straight line $l\in L$, passing through $z$.  
		\end{prop}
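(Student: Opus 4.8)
The plan is to realize $l$ as an Arzel\`a--Ascoli limit of the segments $[x_i,y_i]$, to invoke Theorem~\ref{AS} to force this limit into the single leaf $L$, and then to use triviality of the holonomy of the noncompact leaf $L$ to upgrade the minimality of the $\gamma_i$ to minimality of the limit.

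First I would reparametrize. Let $\gamma_i\colon[-a_i,b_i]\to L_i$ be the unit-speed parametrization of $[x_i,y_i]$ with $\gamma_i(0)=z_i$, so that $a_i=\ell([x_i,z_i])\to\infty$ and $b_i=\ell([z_i,y_i])\to\infty$ by hypothesis. Extend each to a continuous map $\tilde\gamma_i\colon\RR\to L_i$ equal to $x_i$ for $t<-a_i$ and to $y_i$ for $t>b_i$; then $\tilde\gamma_i\in C_{\cal F}(\RR,M)$. Since each $\gamma_i$ is $1$-Lipschitz and $M$ is compact, the family $\{\tilde\gamma_i\}$ is equicontinuous with relatively compact image, so Arzel\`a--Ascoli together with a diagonal argument over the exhaustion $\RR=\bigcup_T[-T,T]$ yields a subsequence converging uniformly on compact sets to some $1$-Lipschitz $\gamma\colon\RR\to M$. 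Because $a_i,b_i\to\infty$, on each $[-T,T]$ we eventually have $\tilde\gamma_i=\gamma_i$, whence $\gamma(0)=z$. By Theorem~\ref{AS} the space $C_{\cal F}(\RR,M)$ is closed, so $\gamma$ maps $\RR$ into a single leaf; as $\gamma(0)=z\in L$, that leaf is $L$, i.e. $\gamma\colon\RR\to L$.

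It remains to show $\gamma$ is a straight line of $L$, i.e. $d_L(\gamma(s),\gamma(t))=|s-t|$ for all $s,t$; a distance-realizing arc-length curve is automatically a minimizing geodesic, so this delivers both the geodesic and the metric property. The bound $d_L(\gamma(s),\gamma(t))\le|s-t|$ is immediate since $\gamma|_{[s,t]}$ is a $1$-Lipschitz curve in $L$. For the reverse inequality I argue by contradiction: suppose $d_L(\gamma(s),\gamma(t))<|s-t|$ and pick a path $\sigma$ in $L$ from $\gamma(s)$ to $\gamma(t)$ with $\ell(\sigma)<|s-t|$. The crucial point is to transport $\sigma$ into the nearby leaves $L_i$ with controlled length \emph{and} with the correct endpoints. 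Since $\cal F$ is almost without holonomy and $L$ is noncompact, $H(L)$ is trivial, so the holonomy of the loop $c=(\gamma|_{[s,t]})\ast\bar\sigma\in\pi_1(L,\gamma(s))$ is the germ of the identity. Holonomy transport along $\gamma|_{[s,t]}$ carries $\gamma_i(s)$ to $\gamma_i(t)$ (for $i$ large the curve $\gamma_i|_{[s,t]}\subset L_i$ shadows $\gamma|_{[s,t]}$), so triviality of $\Psi([c])$ forces transport along $\sigma$ to carry $\gamma_i(s)$ to $\gamma_i(t)$. Pushing $\sigma$ through a finite chain of foliated charts covering it thus produces, for $i$ large, a path $\sigma_i\subset L_i$ from $\gamma_i(s)$ to $\gamma_i(t)$ whose length tends to $\ell(\sigma)$ (the induced metrics on the plaques depend continuously on the transverse coordinate since $\cal F$ is $C^2$). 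Then $\ell(\sigma_i)<|s-t|=d_{L_i}(\gamma_i(s),\gamma_i(t))$ for $i$ large, contradicting the minimality of $\gamma_i|_{[s,t]}$ in $L_i$. Hence $d_L(\gamma(s),\gamma(t))=|s-t|$, and $\gamma$ is the required straight line through $z$.

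I expect the main obstacle to be precisely this transport step: making rigorous that the shortcut $\sigma$ in $L$ pushes to a genuine shortcut $\sigma_i$ in $L_i$ joining \emph{exactly} $\gamma_i(s)$ to $\gamma_i(t)$, with lengths converging. This is where triviality of the holonomy of the noncompact leaf $L$ (and hence the hypothesis almost without holonomy, guaranteed in the nonnegative Ricci case by Corollary~\ref{Ric}) is indispensable: without it the point $\gamma_i(s)$ and the starting point of the pushed path, though close in $M$, could be far apart in the intrinsic metric of $L_i$, and the length control (via the collar estimate of Proposition~\ref{pr1} and continuity of the plaque metrics) would collapse.
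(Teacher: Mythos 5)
Your proposal is correct and follows essentially the same route as the paper's own proof: an Arzela--Ascoli limit combined with Theorem~\ref{AS} to force the limit curve into the single noncompact leaf $L$ through $z$, followed by a holonomy-transport argument (exploiting triviality of $H(L)$) to push any hypothetical shortcut into the nearby leaves $L_i$ and contradict minimality of the segments $[x_i,y_i]$. The only cosmetic differences are that the paper first extracts limits of fixed-length subsegments $[x_i^l,y_i^l]$ and then lets $l\to\infty$, and realizes the shortcut/transport via a transverse square $[x^l,y^l]\times I$ rather than a chain of foliated charts, but the mechanism is identical.
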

	
	\begin{proof}   
		Starting from some $i>i_0$ we can replace the sequence of segments $[x_i,y_i]$ by the sequence of segments  $[x^l_i,y^l_i]\subset [x_i,y_i]$  of length $l$, such that  $z_i$ is the midpoint of  $ [x^l_i,y^l_i]$.  We have $\rho_{L_i}(x_i,y_i)\geq \rho_{M}(x_i,y_i)$, where $\rho_{L_i}$ and $\rho_{M}$ denote the internal metrics in the leaf $L_i$ and $M$   respectively induced by the riemannian metric on $M$. From Arzela - Ascoli theorem (see \cite [Theorem 2.5.14] {BBI}) and applying theorem \ref{AS} we can see that the the sequence $\{[x^l_i,y^l_i]\}$ converges uniformly to the rectifiable curve $r_l \subset L$ passing though $z$ with endpoints $x^l$ and $y^l$ in $L$. Show that  $r_l$ is shortest geodesic segment $[x^l,y^l]$. Indeed, if it is not true, then consider the loop $r_l\cup [x^l,y^l]$.  Consider  a smooth  transversal to $\cal F$  embedding of the square $[x^l,y^l]\times I \subset M$ such that $[x_t^l,y_t^l]:=[x^l,y^l]\times t\subset L_t\in \cal F$ and $[x_0^l,y_0^l] =[x^l,y^l]$.   The length $l_t$ of $[x_t^l,y_t^l]$  continuously depends on  $t$ (recall that $\cal F$ is $C^2$-smooth and so is induced foliation on $[x^l,y^l]\times I$) and there exist $\varepsilon$ and $\delta$ such that for  $0\leq t< \delta$ the $l_t < l-2 \varepsilon$.
		But $x^l_i \to x^l$ and  we can assume that for   $i>i_0$  $x^l_{t_i}$ and        $x^l_i$ belongs to  common local leaf of some foliated chart  $(U_{\alpha},\phi_{\alpha})$ with diameters of local leaves less then $\varepsilon$ (see remark \ref{rem7}) and therefore $\rho_{L_i}(x^l_i, x^l_{t_i})<\varepsilon$. We also assume without loss of generality that if  $i>i_0$ then $t_i<\delta$ and $\lim_{i\to \infty}t_i =0$. But    $y^l_i \to y^l$ and starting from some $i\geq i_1>i_0$ $\{y^l_i\}$ belong to a foliated chart $(U_{\beta},\phi_{\beta})$  which  contains $y^l$ and the  diameters of  local leaves of the $(U_{\beta},\phi_{\beta})$  smaller then $\varepsilon$.   If $y^l_{t_i}$ belongs to the same local leaf as $y^l_i $, we obtain the contradiction with shortest  of segments $[x^l_i,y^l_i]$: $l= \rho_{L_i}(x^l_i, y^l_{i}) \leq \rho_{L_i}(x^l_i, x^l_{t_i}) + \rho_{L_i}(x^l_{t_i},y^l_{t_i}) + \rho_{L_i}(y^l_i, y^l_{t_i}) = 2\varepsilon +l_{t_i}<l$. It means that for $i>i_1$  $y^l_i$ and $y^l_{t_i}$ belong to different local leaves of $(U_{\beta},\phi_{\beta})$ and the loop $r_l\cup [x^l,y^l]$ represents nontrivial holonomy of $L$. This is impossible since $\cal F$ is supposed almost without holonomy. This implies that  $r_l = [x^l,y^l]$.  So we have  a sequence of  shortest geodesic segments $\{[x^{l_j},y^{l_j}]\}$ passing through midpoint $z\in L$,  which are limits of sequences of shortest segments $\{[x_i^{l_j},y_i^{l_j}]\}\subset L_i$, where $l_j\to \infty$.  The sequence $\{[x^{l_j},y^{l_j}]\}$ contains a  subsequence converging to a straight line in $L$ (see for example \cite {BBI}).  
		\end{proof}

\begin{center} 
 \subsection {The main result}
\end{center} 

  \begin{defn}\rm
 Let $L$ be a   noncompact leaf of the elementary block $B$  and $K\in \partial B$ is a compact leaf. 	We call the curve  $\gamma(t)\subset L, \ t\in [0,\infty)$ {\it outgoing} (to $K$) if for each $\varepsilon >0$ there is  $t_0$, such that  $\gamma(t), t\in [t_0,\infty)$ is inside of   $\varepsilon$ - collar of $K$.  Clear that  $\bar\gamma\cap K\not = \emptyset$.
 \end{defn}

 \begin{defn}\rm
 	Let $L$ be a   noncompact leaf of the elementary block $B$ and $K\in \partial B$ is a compact leaf.
 We say that an element $[\alpha]\in \pi_1(L,x_0)$ is not peripheral  along outgoing $\gamma \subset L$ (to $K$),   if there is some 
 $\varepsilon$- collar $ U_{\varepsilon}K$ of $K$ and there is no loop $\alpha_t$ with base point $x_t = \gamma (t)$ such that $\alpha_t$  is free  homotopic to $\alpha$ and $\alpha_t$ belongs to $ U_{\varepsilon}K$.  Otherwise we say that $\alpha$ is peripheral along $\gamma$.  
 	\end{defn}
 
\begin{rem}\rm The term "peripheral"  is borrowed from the work \cite {Bu}.
	But our definition of peripheral elements of $\pi_1(L)$ along outgoing curve $\gamma$ is  in some sense foliated analogue  to  the   definition  of   fundamental group's elements having geodesic loops to infinity property along a ray $\gamma$ which was done in the work \cite{So}. 
	\end{rem}

 Let us call a {\it splitting dimension} of the  complete nonnegative Ricci curvature manifold $M$    a dimension $k$ of $E^k$ in the splitting  theorem.  
 \newline

 Now we turn to the proof of the Main theorem.

 \begin{proof} The main theorem holds for compact leaves (see remark \ref{rem 1}). Suppose $L$ is noncompact.
 	 	Suppose also that  $\cal F$ is transversely oriented and $M$ is oriented. 
  We have two cases:

	1) {\it {\cal F} does not contain compact leaves.}
	\newline
	
	In this case according to theorem \ref{Im} and corollary \ref{Ric}  all leaves are dense and diffeomorphic.  The result follows from the main theorem of \cite{AS} which claims that almost all leaves  split  as  riemannian product $S\times E^n$, where $S$ is a compact. 
\newline	

 	2) {\it {\cal F}  contains  compact leaves and  $L$ is a typical leaf of  the elementary block $B$.}
 	\newline 
 	
 	 Recall that we have the monomorphism $\pi_1(L) \to \pi_1(B)$ (see \eqref{3}).
  Fix typical leaf $L$ and consider the connective component of its universal covering $\Wi L \subset \Wi{\Int B}$. Recall that the universal covering of $\Int B$ has the form \eqref{4}. According to theorem \ref{CG} 
  
  \begin{equation} \label{5}
 	\Wi L\cong N\times E^k, \ k\geq 0,
 	\end{equation}
  	where $N$ does not contain  straight lines. Let us  show that each leaf $\Wi L_{\Wi x}$ passing though $\Wi x$ of pull back foliation $\Wi {\cal F}$ of $\Wi {\Int B}$ which is preimage of $L_{ x} \in \bar L\in \Int B$ splits with the same splitting dimension $k$ as $\Wi L$. Indeed, let $\Wi x\in \Wi L_{\Wi x}$ be an arbitrary point. We can find a sequence $\Wi x_i\in \Wi L_{\Wi x_i}$ such that $\lim_{i\to \infty}\Wi x_i = \Wi x$, where $\Wi L_{\Wi x_i}$ are preimages of $L$ passing through $\Wi x_i \in \Wi{\Int B}$ with respect to covering $p:\Wi{\Int B} \to \Int B$. Let us note that actually nontrivial is the case of dense $B$ only.  By \eqref {5} we can find a sequence of $k$ orthogonal straight lines passing through $\Wi x_i$, which converge to $k$ orthogonal straight lines passing through $\Wi x$. This follows from some modification of proposition \ref{str}.
 	 It follows  that $\Wi L_{\Wi x}\cong N_x\times E^l$, $l\geq k$. Changing $L$ and $L_{ x}$ we obtain $l=k$. It is clear that splitting  directions $\RR^k$ form a subdistribution in $T^{\cal F}M$. Let us call it a splitting distribution and orthogonal to it in $T^{\cal F}M$ we  call a horizontal distribution.      
 	
 	Let us suppose that $N$ is not compact in the splitting $\Wi L\cong N\times E^k$. Then there is an  horizontal ray $\Wi \gamma  \in N\times \vec 0$. Show that $p(\Wi \gamma) =: \gamma$ is an outgoing curve. Suppose that $\gamma$ is not outgoing. Then we can find a  sequence  of points $\gamma (t_i), \ t_i \to \infty$, which converge to point $x\in \Int B$. It means that we can find a sequence of points $\Wi x_i:= \Wi \gamma_i(t_i)$ which converge to the point $\Wi x$ such that $p(\Wi x) = x$. Here $\Wi \gamma _i$ are corresponding preimages of $\gamma$ with respect to covering map $p:\Wi{\Int B} \to \Int B$. Since $t_i\to \infty$  we can find  a sequence of shortest horizontal  segments  $[a_i,b_i]\in \Wi \gamma_i$ through $\Wi x_i$ such that  lengths $[a_i,\Wi x_i]$ and $[b_i,\Wi x_i]$ limits to $ \infty$ and thus  it contains a  subsequence converging to the horisontal straight line through $\Wi x$ by proposition \ref{str}. It is a contradiction since $N_{\Wi x}$ in the splitting $\Wi L_{\Wi x} \cong N_{\Wi x} \times E^k$ does not contain straight lines.    Thus $\gamma$ is outgoing.

 	Consider two possibilities.
 	\newline
 	
 	a) {Each element $[\alpha] \in \pi_1(L)$ is  peripheral along  outgoing $\gamma$ to  $K\in \partial B$. }
 		\newline
 		
 		Suppose $\gamma (t) \subset U_{\varepsilon}K$ for $t\in [t_0,\infty)$. In this case we can change  each loop $\alpha$ in $L$ with the base point in $\gamma (t_0)$ by the homotopic loop $\alpha'\subset U_{\varepsilon}K$ with the same base point.   Since $K$ is a deformation retract of $U_{\varepsilon}K$ we have that $i_*(\pi_1(L))$ is isomorphic to  subgroup of $ i_*(\pi_1(K))$, where $i_*$ denotes  in both cases homomorphisms induced by inclusions  $i:L\to B$ and $i:K\to B$.    But a subgroup and  an image of a finitely generated almost  abelian group is finitely generated almost  abelian, thus the result follows from  \eqref{Wil} and theorem \ref{CG}. 
 		\newline
 		
 		b) {There is a non peripheral element $[\alpha]\in \pi_1(L)$ along  $\gamma$.}
 		\newline
 		
  From the definition follows  that there is a countable family of the free homotopic to the loop $\alpha$,  minimal length  geodesic loops $\{ \alpha_i\}$ with the base points $x_i = \gamma (t_i), \ t_i \to \infty$  and the sequence $\{z_i\in\alpha_i\}$ such that $z_i\in  B\setminus U_{\varepsilon}K$ for some $\varepsilon$-collar $U_{\varepsilon}K$ of $K\in \partial B$. Let us  suppose that $\{z_i\}$ converge to $z\in B\setminus U_{\varepsilon}K$.  Since  $\gamma$ is outgoing we can choose subsequence $\{x_j\}$ in $\{x_i\}$ converging to $x\in K$ and  we can see that $\rho(x_j,z_j) \to \infty$. Indeed, by proposition  \ref{pr1} for arbitrary fixed $R> diam \ K$ there exist $\delta >0$  such that  for each   $0\leq t<\delta$ the balls $B_t(R):=\exp^{\cal F}\circ i(t\times D^{n-1}(R))$ belong to   $\omega$ - collar $U_{\omega}K$ ($\omega < \varepsilon$), where $i(0,0)=x$.
 Since $x_i\to x$, there exists $i_0$ such that  $x_i \in  B_t(\sigma) \subset B_t(R)$ for $i>i_0$  and fixed   small enough  $\sigma>0$ (see remark \ref{rem7}) and $\rho_{L_i}(x_i, z_i)\geq R - \sigma$. Since $R$ is taken arbitrary  the result follows.
  
  But it means that we have a sequence of the  shortest geodesic segments  $[\Wi x_j,\Wi y_j] \subset  \Wi L\subset \Wi{\Int B}$ of length $l_j\to \infty$  with the ends $\Wi x_j \in \Wi \gamma \subset \Wi L$ and  $ \Wi y_j \in \Wi \gamma_{\alpha} := \Theta ([\alpha]) (\Wi \gamma )\subset \Wi L$ such that  $p(\Wi x_j) = p(\Wi y_j) = x_j$, where $\Theta$ defines an isometric action of $\pi_1(B)$ on $\Wi {\Int B}$.   
 		    Let $\Wi z_j \in [\Wi x_j,\Wi y_j]$ be such that $p(\Wi z_j)=z_j$.
 		    Since $\rho(x_j,z_j) \to \infty$ we have $\rho(\Wi x_j, \Wi z_j) \to \infty$ and $\rho(\Wi y_j, \Wi z_j) \to \infty$.
 		     Choose $g_j\in\pi_1(B)$   such that  $\lim_{j\to \infty} \Theta(g_j)\Wi z_j \to \Wi z \in \Wi {\Int B}$ and $p(\Wi z) = z$. 
 		    Pay attention to that $[\Wi x_j,\Wi y_j]$ is not need to be horizontal, since $\Wi \gamma_{\alpha}$ is not need to belong to $N\times \vec 0$, but it belongs to $N\times \vec a$ for some $\vec a  \in \RR^k$. Change the ray $\Wi\gamma$  to the ray $\Wi \gamma ':= \Wi \gamma + \vec{a}$
 		    and $\Wi x_j$ to $\Wi x'_j:=\Wi x_j + \vec a$. Clearly that shortest geodesic segments 
 		    $[\Wi x'_j,\Wi y_j]$ are the projections of $[\Wi x_j,\Wi y_j]$ along $\vec a$. Let $z'_j \in [\Wi x'_j,\Wi y_j]$ is images of $z_j$ with respect to the projections. We can see that $[x_j,x_j',z_j]$ is right triangle.
 		    Since $\rho(x_j,x'_j) =|a|$ we have that $lim_{j\to \infty}\rho(x'_j,y_j) = \infty$ and $\rho(z_j,z'_j)< |a|$. This immediately implies from proposition \ref{str} that a sequence of shortest horizontal geodesic segments $\Theta(g_j)[\Wi x'_j,\Wi y_j]$ has a subsequence converging to the horizontal straight line that  is impossible. 
 		    \newline
 		    
 		    If  $N$ in the splitting $\Wi L \cong N \times \RR^k$ is compact then the result follows from theorem \ref {Wi}. 
 		    
 		If $M$ is not orientable and (or) $\cal {F}$  is not transversely orientable  we can go to the finitely sheeted oriented isometric covering $p:\bar M \to M$   such that pull-back  foliation $\bar{\cal F}$ is transversely oriented and for which the result was proved.  Each leaf $L \in \cal F$ is finitely   covered by some leaf $\bar L \in \bar{\cal F}$. And the  result in general case follows from the fact that  $p_*:\pi_1(\bar L)\to  \pi_1 (L)$ is a  monomorphism  and $\pi_1(\bar L )$ is isomorphic to some subgroup of finite index in $\pi_1 (L)$.
 	\end{proof}

 \begin{cor} \rm The leaves of a codimension one nonnegative Ricci curvature foliation of a closed riemannian manifold satisfy the   Milnor's conjecture.
 	\end{cor}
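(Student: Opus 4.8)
The plan is to obtain this corollary as an immediate consequence of the Main theorem, the two statements differing only in strength: the Main theorem asserts that $\pi_1(L)$ is finitely generated \emph{and} almost abelian, whereas the corollary records just the finite generation that Milnor's conjecture demands.

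First I would check that a leaf actually falls under the hypotheses of Milnor's conjecture. A leaf $L$ of a foliation of the closed manifold $M$, equipped with the metric induced from $M$, is a complete Riemannian manifold: a Cauchy sequence for the intrinsic (leaf) metric is a fortiori Cauchy in $M$, so it converges in the compact manifold $M$ to a limit point, which lies on $L$ and is attained intrinsically because of the local product structure of the foliation. Since by hypothesis $\cal F$ is a nonnegative Ricci curvature foliation, $L$ satisfies $Ric(L)\geq 0$ in this induced metric. Hence $L$ is exactly a complete manifold of nonnegative Ricci curvature, the class of spaces to which the conjecture refers.

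Next I would apply the Main theorem to this $L$, which yields that $\pi_1(L)$ is finitely generated (indeed almost abelian). Finite generation of $\pi_1(L)$ is precisely the content of Milnor's conjecture for $L$, and since $L$ was an arbitrary leaf, every leaf of the foliation satisfies the conjecture.

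I do not expect any genuine obstacle here beyond the Main theorem itself; the one place that requires a remark is the completeness of the induced leaf metric, which is exactly where the compactness of the ambient manifold $M$ is used. The remainder is a formal deduction.
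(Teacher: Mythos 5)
Your proposal is correct and follows the same route as the paper, which treats this corollary as an immediate consequence of the Main theorem (finite generation being the weaker half of ``finitely generated almost abelian''). Your added verification that leaves are complete in the induced metric is a reasonable point of care that the paper leaves implicit; the cleanest justification is that the leafwise geodesic flow on the compact unit tangent bundle of $T^{\cal F}M$ is complete, so Hopf--Rinow applies to each leaf.
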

 
 The author is grateful  to the  referee for useful comments.

\renewcommand{\refname}{\centering\textbf{\normalsize References}}

\end{document}